\documentclass[11pt]{article}
\usepackage[utf8]{inputenc}
\usepackage[T1]{fontenc}
\usepackage{lmodern}
\usepackage{amsmath,amssymb,amsthm,mathtools}
\usepackage{enumitem}
\usepackage{booktabs}
\usepackage{graphicx}
\usepackage{comment}
\usepackage{float}
\usepackage[margin=1in]{geometry}
\usepackage[colorlinks=true,linkcolor=blue,citecolor=blue,urlcolor=blue]{hyperref}
\emergencystretch=3em

\newtheorem{theorem}{Theorem}[section]
\newtheorem{lemma}[theorem]{Lemma}
\newtheorem{proposition}[theorem]{Proposition}

\newtheorem{corollary}[theorem]{Corollary}
\newtheorem{definition}[theorem]{Definition}
\newtheorem{remark}[theorem]{Remark}

\newcommand{\Z}{\mathbb Z}
\newcommand{\as}{\alpha_s}
\newcommand{\ac}{\alpha_{C_4}}
\newcommand{\CC}{\operatorname{CC}}
\newcommand{\Mod}[1]{\ (\mathrm{mod}\ #1)}

\title{On a problem of Caro on the $\Z_3$-Ramsey number of forests}
\author{
    Jos\'e D. Alvarado\thanks{Faculty of Mathematics and Physics, University of Ljubljana, Slovenia, and Institute of Mathematics, Physics and Mechanics, Slovenia. \texttt{jose.alvarado@fmf.uni-lj.si}} 
    \and Lucas Colucci\thanks{Corresponding author. Instituto de Matem\'atica e Estat\'istica, Universidade de S\~ao Paulo, Brazil. \texttt{lcolucci@ime.usp.br}} 
    \and Roberto Parente\thanks{Instituto de Computa\c{c}\~ao, Universidade Federal da Bahia, Brazil. \texttt{roberto.parente@ufba.br}}
}
\date{}

\begin{document}
\maketitle

\begin{abstract}
Let $k$ be a positive integer and let $G$ be a graph with $k\mid e(G)$.  The zero-sum Ramsey number $R(G,\Z_k)$ is the least integer $N$ such that every coloring $\chi:E(K_N)\to \Z_k$ contains a copy $G'$ of $G$ with
\[
        \sum_{e\in E(G')}\chi(e)=0.
\]
Caro conjectured a formula for $R(T,\Z_3)$ when $T$ is a tree.  We settle the corrected form of this conjecture and extend it from trees to forests.  Namely, we show that, if $F$ is a forest on $n$ vertices with $3\mid e(F)$ without isolated vertices, then
\[
R(F,\mathbb{Z}_3)=\begin{cases}
			n+2, & \text{if $F$ is $1 \Mod 3$ regular or a star;}\\
            n+1, & \text{if $3\nmid d(v)$ for every $v \in V(F)$ or $F$ has exactly one}\\ 
            & \text{vertex of degree $0 \Mod 3$ and all others are $1 \Mod 3$,}\\ 
            & \text{and $F$ is not $1 \Mod 3$ regular or a star;}\\
            n, & \text{otherwise.}
		 \end{cases}
\]
\end{abstract}

\noindent\textbf{Keywords.} Ramsey Theory; Zero-sum problems; trees

\section{Introduction and statement of the theorem}

The Erd\H{o}s--Ginzburg--Ziv theorem~\cite{Erdos1961-qb} asserts that every sequence of $2m-1$ elements of $\Z_m$ contains an $m$-term subsequence whose sum is zero.  Bialostocki and Dierker~\cite{Bialostocki1990-zw,Bialostocki1990-vx,Bialostocki1992-sa} connected this phenomenon with Ramsey theory by replacing monochromatic subgraphs with zero-sum subgraphs. More precisely, the zero-sum Ramsey number $R(G,\Z_k)$ is the least integer $N$ (if it exists) such that every coloring $\chi:E(K_N)\to \Z_k$ contains a copy $G'$ of $G$ with
\[
        \sum_{e\in E(G')}\chi(e)=0.
\]

Such a copy of $G$ of a graph is called \emph{zero-sum}. The zero-sum Ramsey number $R(G,\Z_k)$ exists if and only if $k\mid e(G)$, and it is at most the ordinary $k$-color Ramsey number $R_k(G)$.  

The case $\Z_2$ was completely determined by Caro~\cite{caro1994complete}, building on work of Alon and Caro~\cite{Alon1993-wh}.  The next cyclic group, $\Z_3$, is substantially less rigid.  Caro~\cite{caro2019problem} proposed the following tree problem at an open-problem session on zero-sum Ramsey theory.

\begin{quote}
For a tree $T$ on $n\equiv 1\pmod 3$ vertices, determine whether $R(T,\Z_3)$ is $n$, $n+1$, or $n+2$ from the degree sequence of $T$.
\end{quote}

Caro and Mifsud~\cite{caro2025zero} recently made progress on this problem.  The original formulation has to be corrected in one degree pattern.  The corrected statement is most compact when written for forests.  Let $F$ be a forest with $3\mid e(F)$.  We say that $F$ has
\begin{itemize}[leftmargin=2.2em]
\item \emph{type $2$} if $F$ is a star or every vertex of $F$ has degree $1\pmod 3$;
\item \emph{type $1$} if either no vertex of $F$ has degree $0\pmod 3$, or $F$ has exactly one vertex of degree $0\pmod 3$ and every other vertex has degree $1\pmod 3$, and $F$ is not of type $2$;
\item \emph{type $0$} otherwise.
\end{itemize}
Equivalently, a type $0$ forest has either two vertices of degree $0\pmod 3$, or one vertex of degree $0\pmod 3$ and one vertex of degree $2\pmod 3$.

\begin{theorem}\label{thm:main}
Let $F$ be a forest on $n$ vertices, with no isolated vertices, and suppose that $3\mid e(F)$.  Then
\[
R(F,\Z_3)=
\begin{cases}
 n+2, & \text{if $F$ is a star or $F$ is $1\pmod 3$-regular,}\\[2mm]
 n+1, & \text{if either $3\nmid d(v)$ for every $v\in V(F)$,}\\
      & \text{or $F$ has exactly one vertex of degree $0\pmod 3$}\\
      & \text{and all other vertices have degree $1\pmod 3$,}\\
      & \text{and $F$ is neither a star nor $1\pmod 3$-regular,}\\[2mm]
 n,   & \text{otherwise.}
\end{cases}
\]
In the terminology above, $R(F,\Z_3)=n+t$ for every type $t$ forest $F$ on $n$ vertices without isolated vertices.
\end{theorem}

\begin{remark}
The assumption that $F$ has no isolated vertices is only for notational convenience.  If $v\notin V(G)$, then
\[
R(G\cup\{v\},\Z_3)=\max\{R(G,\Z_3), |V(G)|+1\}.
\]
\end{remark}

\section{Preliminaries and overview of the main result}\label{sec:prelim}

For a leaf $\ell$ in a forest, its unique neighbour is called its parent.  We write $S_d$ for the star with $d$ edges and $P_n$ for the path on $n$ vertices. If $H$ is a subgraph of a graph $G$, then we write $G-H$ for the graph obtained from $G$ deleting the vertices of $H$. For a coloring $\chi$ and a copy $H$ in the host complete graph, write
\[
     \chi(H)=\sum_{e\in E(H)}\chi(e)\in \Z_3.
\]
If the coloring under consideration  $\chi$ contains a zero-sum copy of $F$, we write $\chi\to F$.

\begin{definition}[ordinary switching structure]
A quadruple $(v_1,v_2,v_3,v_4)$ of distinct vertices of a forest $F$ is an \emph{ordinary switching structure} if either
\begin{enumerate}[label=(\roman*),leftmargin=2.2em]
\item $v_1v_2v_3v_4$ is a path and $d(v_2)=d(v_3)=2$, or
\item $v_2$ and $v_3$ are leaves with parents $v_1$ and $v_4$, respectively.
\end{enumerate}
Let $\as(F)$ denote the maximum number of pairwise vertex-disjoint ordinary switching structures in $F$.
\end{definition}

\begin{remark}
    For a forest $F$ without isolated vertices, $\alpha_s(F) = 0$ if and only if, $F$ is a star.
\end{remark}

Deleting $v_1v_2$ and $v_3v_4$ and adding $v_1v_3$ and $v_2v_4$ gives another copy of $F$.  Thus an ordinary switching structure can be embedded on a four-cycle and then switched.

\begin{figure}[H]
    \centering
    \includegraphics[width=.72\linewidth]{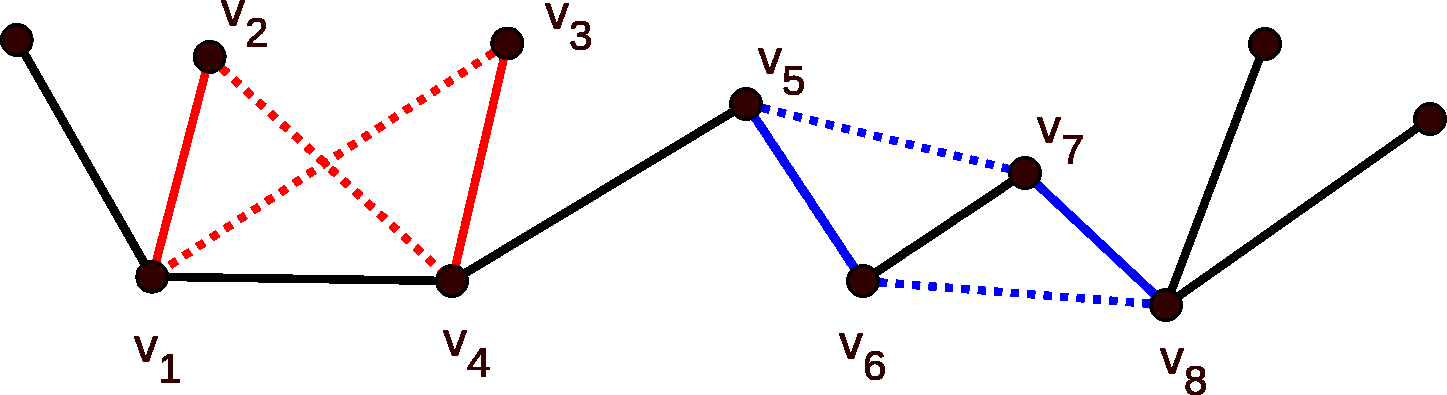}
    \caption{Two ordinary switching structures in a forest.}
    \label{fig:switchingstructures-revised}
\end{figure}

\begin{definition}[generalized switching structures]
A \emph{degree-$2$ generalized switching structure} is a pair $(\ell,u)_2$, where $d(u)=2$ and either the neighbours of $u$ are a vertex $v$ and a parent $p$ of the leaf $\ell$, or the neighbours of $u$ are $v$ and the leaf $\ell$ itself.  Switching interchanges the roles of $u$ and $\ell$ and changes the sum by $\chi(\ell v)-\chi(uv)$.

A \emph{degree-$3$ generalized switching structure} is a quadruple $(\ell,u,v,w)_3$, where $d(u)=3$ and either the neighbours of $u$ are $v,w$ and a parent $p$ of $\ell$, or the neighbours of $u$ are $v,w$ and $\ell$.  Switching interchanges $u$ and $\ell$ and changes the sum by
\[
        \chi(\ell v)+\chi(\ell w)-\chi(uv)-\chi(uw).
\]
In a generalized structure, the auxiliary vertices $p$, $v$ and $w$ are not counted as vertices of the structure unless explicitly stated.
\end{definition}

\begin{figure}[H]
    \centering
    \includegraphics[width=.74\linewidth]{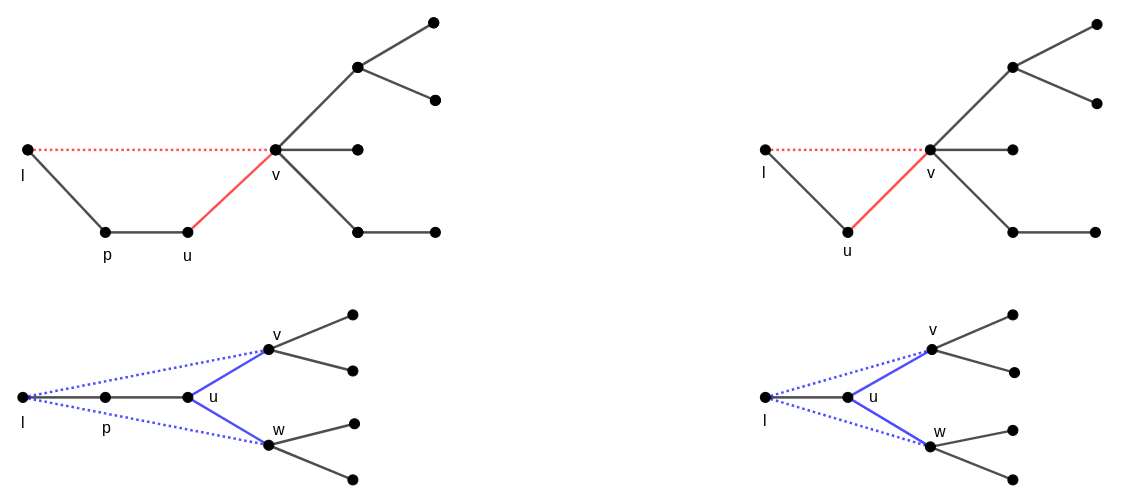}
    \caption{The degree-$2$ and degree-$3$ generalized switching structures.}
    \label{fig:genswitch-revised}
\end{figure}

\begin{definition}[alternating $4$-cycle]
A colored $4$-cycle $x_1x_2x_3x_4$ is \emph{alternating} if the sums of the colors of its two perfect matchings are distinct, that is,
\[
 \chi(x_1x_2)+\chi(x_3x_4)\ne \chi(x_1x_4)+\chi(x_2x_3).
\]
Let $\ac(\chi)$ be the maximum number of vertex-disjoint alternating $4$-cycles in a complete graph colored by $\chi$.
\end{definition}

\begin{definition}[CC coloring]
A coloring of a complete graph is a \emph{CC coloring} if its vertex set can be partitioned into three possibly empty sets $V_0,V_1,V_2$ such that, for each $i\in\Z_3$, the edges of color $i$ are exactly the edges inside $V_i$ together with the edges joining $V_{i+1}$ and $V_{i+2}$, where the indices are taken modulo $3$.  If $|V_i|=n_i$, we call this a $\CC(n_0,n_1,n_2)$ coloring.
\end{definition}

\begin{figure}[H]
    \centering
    \includegraphics[width=.38\linewidth]{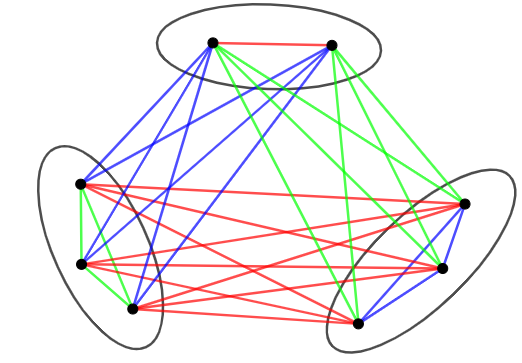}
    \caption{A typical clique-class coloring.}
    \label{fig:cccoloring-revised}
\end{figure}

A important algebraic result that will be used in the proofs is the following theorem of Erd\H{o}s, Ginzburg and Ziv and its stability version. 

\begin{theorem}[Erd\H{o}s-Ginzburg-Ziv '61 \cite{Erdos1961-qb}, Theorem 2.2 on \cite{caro1996zero}] \label{thm:egz}
    Let $m \geq k \geq 2$ be integers such that $k | m$, and $a_1,\dots,a_{m+k-1}$ be given integers. Then, there exists $1\leq n_1 < \dots < n_m \leq m+k-1$ such that $k \mid \sum_{i=1}^m a_{n_i}$. 
    
    Furthermore, let $A = (a_1, a_2, \dots, a_{m+k-2})$ be a sequence of integers for which the conclusion above does not hold. Then, for every divisor $d$ of $k$, the members of $A$ belong to exactly two residue classes of $\mathbb{Z}_d$ say $a$ and $b$, each of these residue classes contains a number congruent to $- 1 \pmod{d}$ members of $A$ and further $\gcd(b-a, d) = 1$.
\end{theorem}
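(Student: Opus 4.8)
Since this statement is a classical theorem quoted from the literature, my plan is to reduce the stated generalized form to the original Erdős–Ginzburg–Ziv theorem (the case $m=k$) and then recall a self-contained argument for the latter together with its extremal characterization. For the existence part, I would first use $k \mid m$ to write $m=qk$, so that the hypothesis supplies $m+k-1=(q+1)k-1$ integers. The idea is a greedy extraction of zero-sum blocks: whenever at least $2k-1$ of the $a_i$ remain, the classical EGZ theorem yields a $k$-element subset whose sum is divisible by $k$; removing it and iterating, one checks that before the $i$-th extraction exactly $(q+2-i)k-1$ elements are left, so the EGZ hypothesis $(q+2-i)k-1\ge 2k-1$ holds precisely for $i\le q$. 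Hence $q$ pairwise disjoint blocks can be extracted, their union has $qk=m$ elements, and its sum is $\equiv 0 \pmod k$, as required. This reduces everything to the case $m=k$.

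For the classical EGZ theorem itself, I would prove the prime case $k=p$ via the Chevalley–Warning theorem. Over $\mathbb{F}_p$ consider the two congruences $\sum_{i} a_i x_i^{p-1}\equiv 0$ and $\sum_i x_i^{p-1}\equiv 0$ in the $2p-1$ variables $x_i$. Their degrees sum to $2(p-1)=2p-2<2p-1$, so Chevalley–Warning forces the number of common zeros to be divisible by $p$; as the all-zero tuple is a solution, a nontrivial one exists, and Fermat's little theorem turns the second congruence into the statement that the number of nonzero $x_i$ is a nonzero multiple of $p$, hence exactly $p$, whose corresponding $a_i$ sum to $0 \pmod p$ by the first congruence. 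The general modulus then follows from the standard multiplicativity of EGZ: if the theorem holds for $n_1$ and $n_2$ it holds for $n_1 n_2$, by extracting $2n_2-1$ disjoint zero-sum $n_1$-blocks (the counting being analogous to the reduction above) and applying EGZ for $n_2$ to their rescaled sums $b_i$. Factoring $k$ into primes completes the existence proof.

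The main obstacle is the stability statement. Here my plan is to fix a divisor $d\mid k$, reduce the $a_i$ modulo $d$, and analyze sequences of length $m+k-2$ admitting no $m$-subset with sum $\equiv 0 \pmod k$. The key observation is that the block-extraction above is tight for such a near-extremal sequence only if each intermediate invocation of EGZ is itself tight; by the prime-case analysis this forces the residues modulo each prime $p\mid k$ to realize the equality case of the Cauchy–Davenport inequality, which by Vosper's theorem concentrates them on two classes differing by a unit, each occurring $\equiv -1$ times. Propagating this two-class structure from the prime factors back to every divisor $d$ (via the Chinese Remainder Theorem) yields the asserted description: two residue classes $a,b$ modulo $d$, each containing $\equiv -1 \pmod d$ members, with $\gcd(b-a,d)=1$. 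I expect the most delicate point to be this final bookkeeping, namely verifying that the extremal configurations forced independently at each prime power assemble coherently into the global two-class description for all $d\mid k$; since the result is only cited, one may alternatively invoke it directly from \cite{caro1996zero}.
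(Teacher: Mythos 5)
Note first that the paper does not prove this statement at all: it is quoted verbatim as a known result (Erd\H{o}s--Ginzburg--Ziv together with Theorem 2.2 of Caro's survey \cite{caro1996zero}), so your proposal can only be measured against the literature, not against an in-paper argument. Your existence half is correct and standard: the count in the greedy extraction of $q$ disjoint zero-sum $k$-blocks from $(q+1)k-1$ elements is right, the Chevalley--Warning proof of the prime case is the classical one, and the multiplicativity step (extracting $2n_2-1$ disjoint zero-sum $n_1$-blocks and applying EGZ for $n_2$ to the rescaled block sums) is sound.

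The stability half, however, contains a genuine gap. Your key claim --- that failure of the conclusion at length $m+k-2$ forces ``each intermediate invocation of EGZ to be tight'' --- is neither proved nor even well posed. Running your own extraction on $(q+1)k-2$ elements still succeeds for the first $q-1$ blocks; all one learns is that the $2k-2$ elements left over (which depend on the choices made during the extraction) contain no zero-sum $k$-subset. That yields the two-class structure only for some residual subsequence, not for all $m+k-2$ members of $A$; promoting it to the global description requires an exchange or re-extraction argument that is precisely the content of the cited inverse theorem. Moreover, Chevalley--Warning is a pure counting statement and gives no information about extremal configurations, so the assertion that the prime case ``forces the equality case of Cauchy--Davenport,'' letting you invoke Vosper's theorem, is unsupported: nothing in your existence proof produces a sumset inequality whose equality case could be analyzed (that route would require proving EGZ via Cauchy--Davenport in the first place). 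The Chinese Remainder assembly across divisors is likewise only asserted. Your closing fallback --- citing the result from \cite{caro1996zero}, exactly as the paper does --- is the correct resolution, but as written the stability paragraph is a plan rather than a proof.
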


\subsection{Overview of the proof}

The proof has two complementary parts.  The lower bounds are given by explicit colorings: clique-class colorings for the type $1$ and $1\pmod 3$-regular cases, and a Hamilton-cycle coloring for stars.  The upper bounds use the following dichotomy.  A coloring is either \emph{flexible}, in the sense that it contains local configurations on which a copy of $F$ can be switched to change its color-sum, or it is \emph{rigid}, in the sense that the absence of such configurations forces a clique-class structure.  Two independent switches are enough over $\Z_3$, because the four possible sums obtained from two nonconstant binary choices cover all of $\Z_3$.

This organization separates the proof into four mechanisms:
\begin{enumerate}[label=(\roman*),leftmargin=2.2em]

\item rigidity of colorings with no alternating $4$-cycle;
\item degree moves inside clique-class colorings;
\item independent switching on alternating $4$-cycles;
\item a finite verification for forests with only one ordinary switching structure.
\end{enumerate}


\section{Lower bounds}\label{sec:lower}

The trivial bound $R(F,\Z_3)\ge |V(F)|$ gives the type $0$ lower bound.  We now show the colorings that avoid zero-sum copies in the remaining cases. To prove that the given colorings do not contain a zero-sum copy of $F$ we shall use the following simple lemma:

\begin{lemma}[zero-sum criterion]\label{lem:criterion}
Let $G$ be a graph with $3\mid e(G)$.  If $e_i=|\chi^{-1}(i)\cap E(G)|$ for $i=0,1,2$, then $G$ is zero-sum if and only if
\[
       e_0\equiv e_1\equiv e_2 \pmod 3.
\]
\end{lemma}

\begin{proof}
Since $e_0+e_1+e_2\equiv0\pmod 3$, the condition $\chi(G) \equiv e_1+2e_2\equiv0 \pmod 3$ is equivalent to $e_1\equiv e_2 \pmod 3$, and then also to $e_0\equiv e_1\equiv e_2 \pmod 3$.
\end{proof}

\begin{proposition}\label{prop:lower}
Let $F$ be a forest on $n$ vertices with $3\mid e(F)$ and no isolated vertices.
\begin{enumerate}[label=(\alph*),leftmargin=2.2em]
\item If $F$ is type $1$, then $R(F,\Z_3)\ge n+1$.
\item If $F$ is type $2$, then $R(F,\Z_3)\ge n+2$.
\end{enumerate}
\end{proposition}

\begin{proof}
First suppose $F$ has no vertex of degree $0\pmod3$.  Color $K_n$ as $\CC(n-1,1,0)$.  In any copy of $F$, the unique vertex in the singleton class contributes exactly $d(v)\not\equiv0$ edges of color $2$, while no edge has color $1$.  Lemma~\ref{lem:criterion} excludes a zero-sum copy.

Now suppose $F$ has exactly one vertex of degree $0\pmod3$ and all others have degree $1\pmod3$.  Color $K_n$ as $\CC(n-2,2,0)$.  If the two vertices placed in the class of size two are adjacent, then the number of color-$1$ edges is $1$, and the number of color-$2$ edges is $d(u)+d(v)-2$, congruent to $0$ or $2$.  If they are nonadjacent, the number of color-$1$ edges is $0$, and the number of color-$2$ edges is $d(u)+d(v)\not\equiv0$.  Again Lemma~\ref{lem:criterion} applies.

If $F=S_{n-1}$ is a star, color a Hamilton cycle in $K_{n+1}$ with color $1$ and all other edges with color $0$.  A copy of $S_{n-1}$ misses exactly one edge incident to its centre, so its sum is $1$ or $2$, never $0$.

Finally, if $F$ is $1\pmod3$-regular, the coloring $\CC(n-1,2,0)$ of $K_{n+1}$ avoids zero-sum copies: If the copy uses only one vertex of $V_1$, then it has no edge of
color $1$ and has $d(v)\equiv1\pmod3$ edges of color $2$.
If it uses both vertices of $V_1$, the preceding two-vertex count applies. 
\end{proof}

\section{Upper bounds}\label{sec:upper}

This section is dedicated to proving the upper bounds. To outline the overall argument clearly, we present the main result first and defer the proofs of the supporting lemmas and propositions to the remainder of the section.

\begin{theorem}[upper bounds]\label{thm:upper}
Let $F$ be a type $t$ forest on $n$ vertices without isolated vertices with $t\in\{0,1,2\}$.  Then $R(F,\Z_3)\le n+t$.
\end{theorem}

\begin{proof}
The case $t=2$ follows from the known results for stars and matchings \cite{caro1996zero} together with Proposition~\ref{prop:type2-upper}.  Assume $t\in\{0,1\}$ and let $\chi$ color $K_{n+t}$.

If $\chi$ has no alternating $4$-cycle, then Corollary~\ref{cor:no-alt-upper} gives $\chi\to F$.  If $\ac(\chi)=1$ and every alternating $4$-cycle has monochromatic complement, then Proposition~\ref{prop:near-rigid} gives $\chi\to F$.

We may therefore assume that $\chi$ contains an alternating $4$-cycle $C$ whose complement is not monochromatic.  If $\as(F)=1$, Propositions~\ref{prop:bgraphs} to \ref{prop:triplestars} apply.  If $\as(F)\ge2$ and $\ac(\chi)\ge2$, Proposition~\ref{prop:flexible}(a) applies.  Finally, if $\as(F)\ge2$ and $\ac(\chi)=1$, Proposition~\ref{prop:active-one-cycle} applies, as all the exceptional families in Lemma~\ref{lem:degree-contrast}  have at most one ordinary switching structure and the type $1$ forests with sibling leaves were settled by Proposition \ref{prop:siblings-upper}. This completes the proof.
\end{proof}

We now start to deal with the case division of Theorem \ref{thm:upper}. First, the result is known for every forest with three edges (\cite{caro1996zero}, Table 2): $R(S_1\cup S_1\cup S_1,\Z_3)=8$, $R(S_1\cup P_3,\Z_3)=6$, $R(P_4,\Z_3)=5$, and $R(S_3,\Z_3)=6$.  We therefore assume throughout this section that $e(F)\ge6$, so $|V(F)|\ge7$.

\subsection{Type $2$ forests}

Stars and matchings were already determined by Caro \cite{caro1996zero}.  We include the short proof for the remaining $1\pmod3$-regular forests.

\begin{lemma}\label{lem:dominant}
Let $m\ge5$ and color $K_m$ so that every vertex is incident with at most two colors, and if two colors appear at a vertex then one of them appears exactly once.  Then either some color induces the complement of a matching, or $K_m$ contains a monochromatic $K_{m-1}$.
\end{lemma}

\begin{proof}
If there is no bichromatic vertex, the coloring is monochromatic and the result follows immediately. Otherwise, choose a bichromatic vertex $u$, whose unique edge of the non-dominant color $b$ is $uv$, and let $a$ be the dominant color of $u$.  If $b$ is not dominant at $v$, then all edges from $v$ to $A=V(K_m)\setminus\{u,v\}$ have one color.  If that color is $a$, all vertices of $A$ have $a$ as dominant color and the non-$a$ edges form a matching.  If it is the third color, then $A$ is monochromatic and extends with either $u$ or $v$ to a monochromatic $K_{m-1}$.

If $b$ is dominant at $v$, then all but possibly one vertex of $A$ have incident edges of colors $a$ and $b$ to $u$ and $v$.  The hypothesis forces these vertices to span a monochromatic clique; the possible remaining vertex must join it in that same color.  Again there is a monochromatic $K_{m-1}$.
\end{proof}

\begin{proposition}\label{prop:type2-upper}
If $F$ is a $1\pmod3$-regular forest on $n$ vertices, $3\mid e(F)$, and $F$ is not a matching, then $R(F,\Z_3)\le n+2$.
\end{proposition}

\begin{proof}
A non-matching $1\pmod3$-regular forest has two sibling leaves $f_1,f_2$ with parent $p$: take a longest path in a nontrivial component, and use that the parent of its terminal leaf has degree at least $4$.

Let $\chi$ color $K_{n+2}$.  If some vertex is incident with three colors, or with four edges of colors $a,a,b,b$ for $a\ne b$, embed $F-\{f_1,f_2\}$ with $p$ at that vertex and complete the two leaves using two of the three, respectively four, reserved neighbours.  The pair-sums available from $\{0,1,2\}$ or from $\{a,a,b,b\}$ cover $\Z_3$, so one completion is zero-sum.

Otherwise the coloring satisfies Lemma~\ref{lem:dominant}.  If there is a monochromatic $K_{n+1}$, embed $F$ inside it.  If a color class is the complement of a matching, embed the forest in that color class; forests embed in the complement of a matching.  Since $e(F)\equiv0\pmod3$, the monochromatic copy is zero-sum.
\end{proof}

\subsection{An auxiliary bound for forests with sibling leaves}

The following auxiliary result handles the residual two-star pattern and several type $1$ forests.  Its proof is included here because it is independent of the switching machinery.

\begin{proposition}\label{prop:siblings-upper}
    Let $F$ be a forest on $n \geq 7$ vertices which is not a star and not $1$ modulo $3$ regular such that $3 \mid e(F)$ and that contains two siblings leaves. Then, $R(F,\mathbb{Z}_3) \leq n+1$. 
\end{proposition}

\begin{proof}
    Let $f_1$ and $f_2$ be two leaves in $F$ with a common parent $p$.
    First of all, notice the result is immediate if the coloring $\chi$ of the complete graph $K$ contains a trichromatic vertex: indeed, if $v$ has three neighbors $u_0$, $u_1$, $u_2$ such that the color of $vu_i$ is $i$ for $i \in \{0,1,2\}$, then we may embed $F-\{f_1,f_2\}$ in $K-\{u_0,u_1,u_2\}$ sending $p$ to $v$, but otherwise arbitrarily. Then, we can embed the two remaining leaves in three different ways, choosing with two of the $u_i$ will be the image of the $f_i$. In this manner, we create three copies of $F$ in $K$ with distinct sums, so that one of them has zero sum.

    In what follows, we may assume that the coloring of $K$ does not contain a trichromatic vertex, i.e., every vertex is incident to edges of at most two different colors. This partition the vertices of $K$ into three sets: $V_{01}$, $V_{02}$ and $V_{12}$, where the vertices in the class $V_{ij}$ only have edges of colors $i$ and $j$ incident to them. The rest of the proof will be divided into cases according to the sizes of the sets $V_{ij}$:

    \textbf{Case 1:} All the $V_{ij}$ are nonempty

    \medskip
    
    Without loss of generality, we may assume that $|V_{12}| \geq 3$. Let $l_1$ and $l_2$ be leaves of $F$ with distinct parents $p_1$ and $p_2$, respectively. We embed $l_1$, $p_2$ and $l_2$ in $V_{12}$.

    If the edge $p_2l_2$ is colored with $1$ (resp. $2$), we embed $p_1$ in $V_{01}$ (resp. $V_{02}$) and leave one of the vertices of $V_{02}$ (resp. $V_{01}$) spare, embedding the rest of the forest arbitrarily. If the sum of $F$ is not zero, we can turn it to zero by moving the embedding of one of the $l_i$ to the spare vertex.

    \textbf{Case 2:} Exactly one the $V_{ij}$ is empty

    Assume that the nonempty sets are $V_{01}$ and $V_{02}$. First, notice that we may suppose that there is an edge of color $1$ in the set $V_{01}$ and an edge of color $2$ in the set $V_{02}$, for otherwise we could merge the two sets in a single $V_{ij}$, which leads us to Case 3. Furthermore, we may assume that there is an edge of color $0$ inside one of the sets, for otherwise we would have a CC coloring and Proposition \ref{prop:cc-copy} would apply. Without loss of generality, assume that it is inside $V_{02}$. In particular, there are three vertices $x$, $y$ and $z$ in $V_{02}$ such that $\chi(xy)=2$ and $\chi(yz)=0$. Let $l_1$ and $l_2$ be leaves of $F$ with distinct parents $p_1$ and $p_2$, respectively. We embed $l_1$ in $x$, $p_1$ in $y$, leave $z$ spare, and embed $l_2$ and $p_2$ in $V_{01}$ in a way that the edge $l_2p_2$ gets color $1$. Again, if the sum of $F$ is not zero, we can turn it to zero by adding or subtracting $1$ by moving one of the $l_i$ to the spare vertex.

    \medskip

    \textbf{Case 3:} Only one of the $V_{ij}$ is nonempty

    Assume without loss of generality that $V_{01}$ is the only nonempty set, i.e., all the edges of the complete graph are colored with either color $0$ or $1$.

    Suppose first that there is an alternating path on $5$ vertices in the coloring, i.e., five vertices $v_1, \dots, v_5$ such that $\chi(v_1v_2)=1$, $\chi(v_2v_3)=0$, $\chi(v_3v_4)=1$ and $\chi(v_4v_5)=0$. Let $l_1$ and $l_2$ be leaves of $F$ with distinct parents $p_1$ and $p_2$, respectively. We embed $l_1$ in $v_1$, $p_1$ in $v_2$, leave $v_3$ spare, embed $p_2$ in $v_4$, and $l_2$ in $v_5$. If the sum of $F$ is not zero, we can turn it to zero by adding or subtracting $1$ by moving one of the $l_i$ to the spare vertex.

    On other hand, if the coloring does not contain an alternating path on $5$ vertices, let us consider a longest alternating path in it. If it has just one edge, the coloring in monochromatic and the result follows immediately.

    If a longest alternating path has two edges, let $uvw$ be one alternating path with $\chi(uv)=0$ and $\chi(vw)=1$. It follows immediately that $\chi(xu)=0$ and $\chi(xw)=1$ for every vertex $x \in K-\{u,v,w\}$. Let us assume without loss of generality that $\chi(uw)=0$. Then, if $x$ and $y$ are vertices of $K-\{u,v,w\}$, considering the path $yxwu$, it follows that $\chi(xy)=1$, i.e., $K-\{u,v,w\}$ is monochromatic of color $1$. Finally, considering the path $xvwu$, it follows that $\chi(vx)=1$ for every $x \in K-\{u,v,w\}$. This implies that $K-\{u\}$ is monochromatic, so there is a zero-sum copy of $F$ in it. 

    Finally, suppose that a longest alternating path has three edges. Let $C$ be a maximum monochromatic clique in $K$. It has at least three vertices (as $R(3,3)=6$). Let us assume without loss of generality that it has color $0$. 

    First, note that $K-C$ is monochromatic. Indeed, if it is not the case, it contains three vertices $x$, $y$ and $z$ with $\chi(xy)=1$ and $\chi(yz)=0$. Also, as $C$ is maximal, there is $u \in C$ such that $\chi(zu)=1$. Let $t$ be another vertex in $C$. Then, the path $xyzut$ is alternating, a contradiction.

    If $K-C$ is monochromatic of color $0$, then we cannot have two independent edges of color $1$ in $K$: indeed, if $\chi(xy)=\chi(zw)=1$ where $x, w \in C$, $y, z \in K-C$ are four distinct vertices, let $t$ be another vertex of $C$. Then, $xyzwt$ is an alternating path on $4$ edges, a contradiction. This proves that all the edges of color $1$ are incident to a single vertex. Deleting this vertex, we get a monochromatic complete graph on $n$ vertices, so there is a zero-sum copy of $F$ in it.

    Finally, if $K-C$ is monochromatic of color $1$, we claim that one of the two colors spans a clique.

    For that purpose, note that for every $y \in C$ all the edges joining $y$ and $K-C$ have the same color. Indeed, if it is not the case, let $x, z \in K-C$, $y \in C$ such that $\chi(xy)=1$ and $\chi(yz)=0$. By the maximality of $C$, there is a vertex $w \in C$ with $\chi(zw)=1$. Let $t$ be another vertex of $C$. Then, the path $xyzwt$ is alternating, a contradiction.

    If there are two vertices $y_1$ and $y_2 \in C$ such that $c(xy_1)=c(xy_2)=1$ for every $x \in K-C$ and a vertex $y_3 \in C$ such that $c(xy_3)=0$ for every $x \in K-C$, then the path $xy_1y_2wy_3$ is alternating for every $x, w \in K-C$, a contradiction.

    This implies that either all the edges in between $C$ and $K_C$ are colored $1$, or there is a vertex $y \in C$ such that $K-C\cup \{y \}$ is the clique spanned by color $1$, which proves the claim.

    Suppose without loss of generality that the clique $Q$ spanned by one of the colors is monochromatic with color $0$ and all other edges in the graph have color $1$.

    If  $|K-Q| \leq 1$ or $|Q| \leq 2$, then $K$ contains a monochromatic $K_n$, and hence a zero-sum copy of $F$. Also, if $|K-Q|=2$, say, $K-Q = \{u,v\}$ then we get a zero-sum copy of $F$ by embedding either a vertex of degree zero modulo $3$ of $F$ in $u$ and leaving $v$ spare, or embedding a vertex $x$ of degree $2$ modulo $3$ in $u$ and a leaf that is not adjacent to $x$ in $v$.

    From now on, we may assume that $|K-Q| \geq 3$ and $|Q| \geq 3$. Let $p$ be a parent of a leaf $f$ with $2 \leq d(p) \leq n-3$ (such a vertex exists from the hypothesis of the statement). We will embed $F$ in $K$ in a way such that $p$ and a number congruent to $2$ modulo $3$ of its neighbors, including $f$, are embedded in $Q$, and the spare vertex of $K$ is in $K-Q$. If we have such an embedding, by switching either $f$ or $p$ with the spare vertex, we can get copies of $F$ with zero sum. To get such an embedding, we first embed $p$, $f$ and as many neighbors of $p$ as possible in $Q$. Moving at most two neighbors of $p$ to $K-Q$, we get $2$ modulo $3$ neighbors in $Q$ and a spare vertex in $K-Q$.
    
\end{proof}

\subsection{Colorings without alternating $C_4$s}\label{sec:lemmas}

This subsection deals with the most rigid colorings, which are the ones that do not contain any alternating four-cycle. It turns out that the structure of such colorings is very simple, which allows to explicitly find the desired zero-sum copies of our forests.

\begin{lemma}[no alternating cycle means CC]\label{lem:no-alt-cc}
If a coloring of $K_m$ has no alternating $4$-cycle, then it is a CC coloring.
\end{lemma}

\begin{proof}
For $m\le4$ the assertion is immediate, so assume $m\ge5$.  Choose three vertices $u,v,w$ with $\chi(uw)=\chi(vw)=a$.  For every other vertex $x$, the cycle $xvwu$ is not alternating, hence $\chi(xu)=\chi(xv)$.  Thus the vertices other than $u,v$ are divided into three sets according to their common color to $u$ and $v$.  Considering a cycle $y_1uvy_2$ determines the color of $y_1y_2$ from the two classes of $y_1,y_2$ and from $\chi(uv)$.  The resulting rule is exactly the CC rule, after a cyclic relabelling of the three classes.
\end{proof}

\begin{lemma}[CC move lemma]\label{lem:cc-move}
Let a copy of a forest $F$ be embedded in a CC coloring with classes $V_0,V_1,V_2$.  Moving an embedded vertex $u$ from $V_i$ to $V_{i+1}$ changes the color-sum by $-d_F(u)$.  Consequently, swapping a vertex $u\in V_i$ with a vertex $v\in V_{i+1}$ changes the color-sum by $d_F(v)-d_F(u)$.
\end{lemma}

\begin{proof}
When $u$ is moved from $V_i$ to $V_{i+1}$, the color of every edge from $u$ to a neighbour of $u$ decreases by $1$ in $\Z_3$, independently of the class containing the neighbour.  Hence the total change is $-d_F(u)$.  The swapping statement is obtained by applying this observation to $u$ and $v$ in opposite directions.
\end{proof}

\begin{proposition}[zero-sum copies in CC colorings]\label{prop:cc-copy}
Let $F$ be a type $0$ forest on $m$ vertices with $3\mid e(F)$. Every CC coloring of $K_m$ contains a zero-sum copy of $F$.
\end{proposition}

\begin{proof}
 If only one CC class is nonempty, every copy of $F$ is monochromatic and hence zero-sum.  Otherwise, choose two nonempty classes, say $V_0,V_1$; one of them has size at least two unless the remaining class contains all but two vertices, in which case we use that class and one singleton class instead.

If $F$ has degrees in all three residue classes, choose vertices $v_0,v_1,v_2$ with $d(v_i)\equiv i\pmod 3$ and embed $v_0,v_2$ in one of the two classes and $v_1$ in the other, and embed the other vertices of $F$ arbitrarily.  If the initial sum is not zero, one of the swaps $(v_0,v_1)$ or $(v_2,v_1)$ changes it by the required nonzero residue, by Lemma~\ref{lem:cc-move}.

If $F$ has two degree-$0$ vertices $x_0,y_0$, choose two leaves $x_1,y_1$.  When two CC classes have size at least two, embed $x_0,x_1$ in one and $y_0,y_1$ in the other, and the other vertices of $F$ arbitrarily. Swapping $x_0$ with $y_1$ changes the sum by $1$, whereas
swapping $x_1$ with $y_0$ changes it by $-1$, by
Lemma~\ref{lem:cc-move}. Thus one of the three resulting sums is zero.  If the coloring has one or two singleton classes, place the degree-$0$ vertices in the singleton classes and embed all remaining vertices arbitrarily; the resulting copy is zero-sum by Lemma~\ref{lem:criterion}.
\end{proof}

\begin{corollary}[rigid colorings]\label{cor:no-alt-upper}
Let $F$ be a type $t$ forest on $n$ vertices.  Every coloring of $K_{n+t}$ with no alternating $4$-cycle contains a zero-sum copy of $F$.
\end{corollary}

\begin{proof}
Add $t$ isolated vertices to $F$.  The resulting forest has type $0$ and therefore satisfies the hypothesis of Proposition~\ref{prop:cc-copy}.  Lemma~\ref{lem:no-alt-cc} gives a CC coloring of the host, and deleting the isolated vertices from the zero-sum copy gives the desired copy of $F$.
\end{proof}

\subsection{Nearly monochromatic colorings with $\alpha_{C_4}(\chi)=1$}

We now turn to somewhat less rigid colorings that contain one but not two disjoint alternating four-cycles, and that are monochromatic in their complement. Perhaps surprisingly, the structure of such colorings is still very explicit, which once more allows us to find zero sum copies of the relevant forests in a straightforward way.

\begin{lemma}[near-rigid one-cycle colorings]\label{lem:one-cycle-structure}
   Let $n \geq 7$ and $\chi$ be a coloring of $K_n$ such that $\alpha_{C_4}(\chi)=1$, i.e., there is an alternating $C_4$ but there is no pair of vertex disjoint alternating $C_4s$ in $\chi$. Furthermore, assume that for every alternating $4$-cycle $C$ in the coloring, the graph $K_n-C$ is monochromatic. Then, $\chi$ follows one of the patterns above:

   \begin{enumerate}
       \item There is a $n-1$ monochromatic clique $C$, and the remaining vertex is not monochromatic;

       \item There is a monochromatic $n-2$ clique $C$ of color $a$, and for each of the two vertices $u$, $v$ outside $C$, all the edges joining it to the vertices of $C$ get the same color, which can have the following subpatterns:

       \begin{enumerate}
           \item All edges outside the clique get a second color $b$;

           \item All edges outside the clique get a second color $b$ except the edge $uv$ that gets color $a$;

           \item All edges incident to $u$ get color $b$ and all the edges incident to $v$ except $uv$ get color $c$;
       \end{enumerate}

       \item There is a triangle $xyz$ in the graph such that $\chi$ is monochromatic apart from the edges of $xyz$.
   \end{enumerate}

\end{lemma}

\begin{proof}
    First, note that any coloring with a $n-1$ monochromatic clique such that the edges of the remaining vertex are not all colored with the same color satisfies the properties. In what follows, we will assume that the coloring does not have a $n-1$ monochromatic clique.

    \medskip

    Let $\chi$ be a coloring of $K_n$ as in the statement of the lemma, and let $C=uvwz$ be an alternating $C_4$ in this coloring. Let us call the color of the edges in $K_n-C$ color $a$. We will call other vertices of the complete graph $x_1, \dots, x_{n-4}$.

    For a vertex $x_i$, we have that either $x_iuvw$ or $x_iuzw$ is alternating. Indeed, if it is not the case, we would have $\chi(x_iu)+\chi(vw)=\chi(uv)+\chi(x_iw)$ and $\chi(x_iu)+\chi(zw)=\chi(uz)+\chi(x_iw)$, which would imply that $\chi(uv)+\chi(zw)=\chi(uz)+\chi(vw)$, which contradicts the fact that $uvwz$ is alternating. As $n-4 \geq 3$, by pigeonhole principle, there are two vertices, say, $x_1$ and $x_2$, such that, say, $x_1uvw$ are $x_2uvw$ are both alternating.

    As $x_1uvw$ is alternating, it follows that $K_n-x_1uvw$ is monochromatic. In particular, all the edges that join $z$ to the vertices of $K_n-C-\{x_1\}$ are also colored with color $a$. The same reasoning with $x_2$ implies that all the edges joining $z$ to the vertices in $K_n-C$ are also colored with color $a$.

    Repeating the same argument with the cycles $x_ivwz$ and $x_ivuz$, we get that either $u$ or $w$, say, $w$, has the same property as $z$, i.e., all edges joining $w$ to the vertices in $K_n-C$ get color $a$.

    This means that $K_{n}-\{u,v\}$ is monochromatic of color $a$, except possibly for the edge $zw$.

    \medskip

    \textbf{Case 1:} $\chi(zw)=b \neq a$.

    In this case, $zwx_ix_j$ is alternating for every $i \neq j$. The condition on the coloring implies, then, that $K_n-zwx_ix_j$ are monochromatic. This implies that $\chi(ux_i)=\chi(vx_i)=\chi(uv)=x$ for every $i$. If the cycle $uvx_1x_2$ was alternating, we would have that the triangle $zwx_3$ is monochromatic, a contradiction. Hence, $uvx_1x_2$ is not alternating, which implies that $x=a$. Now considering the cycles $wvx_1x_2$ and $uzx_1x_2$, we conclude that either $\chi(uz)=a$ or $\chi(vw)=a$. Without loss of generality, suppose that $\chi(uz)=a$. If we had $\chi(zv)=a$, $K_n-\{w\}$ would be a $n-1$ clique of color $a$. If, on the other hand, $\chi(zv) \neq a$, then the same argument applied to $ux$ and $vz$ implies that $\chi(uw)=a$. This implies that all the edges of the $K_n$ get color $a$ except possibly the edges of the triangle $vwz$.

    \medskip
    
    \textbf{Case 2:} $\chi(zw)= a$. In this case, let us call the vertices in $K_n-\{u,v\}$ $y_1,\dots,y_{n-2}$. First, if all the edges joining $u$ and the $y_i$ get the same color, and the same happens for $v$, we get the colorings of the pattern $2$. Suppose now that $\chi(uy_1) \neq \chi(uy_2)$. By considering the alternating cycles $uy_1y_jy_2$, where $3 \leq j \leq n-2$, we get that $\chi(vy_j)=a$ for all those $j$. If $\chi(vy_1)=\chi(vy_2)=a$, we would have a $n-1$ monochromatic clique. Hence, we may assume that $\chi(vy_1) \neq a$. Now, considering the alternating cycles $vy_1y_iy_j$, $2 \leq i, j \leq n-2$, we conclude that $\chi(uy_i)=a$ for every $2 \leq j \leq n-2$. Now, considering $uy_1y_4y_3u$, it follows that $\chi(vy_2)=a$. In particular, if we also had $\chi(uy_1) = a$, we would have that $\chi(uy_i)=a$ for every $i$, forming again a monochromatic $n-1$ clique. We conclude that the coloring is monochromatic apart from the triangle $uvy_1$.
    
\end{proof}

\begin{proposition}[near-rigid embeddings]\label{prop:near-rigid}
    Let $F$ be a type $t$ forest on $n$ vertices, where $t \in \{0,1\}$, and $\chi$ be a coloring of $K_{n+t}$ as in Lemma \ref{lem:one-cycle-structure}. Then, $\chi \rightarrow F$.
\end{proposition}

\begin{proof}
    Let us divide the proof according to the pattern of $\chi$, as in Lemma \ref{lem:one-cycle-structure}.

    \medskip

    \textbf{Case 1:} $\chi$ has an $n+t-1$ monochromatic clique of color $a$.

    If $t=1$, it is enough to embed the forest in the clique arbitrarily. 
    
    If $t=0$, let $u$ be the vertex of the colored $K_n$ which is outside of the clique. If there is and edge $uv$ of color $a$ in $K_n$, it is enough to embed a leaf of $F$ on $u$ and the other vertices arbitrarily to get a zero-sum copy. Otherwise, the edges incident to $u$ have only colors $b$ and $c$, say, $d_b$ edges colored with $b$ and $d_c$ edges with color $c$. Without loss of generality, we may assume that $d_b \geq d_c$. Let $x$ be a vertex of degree congruent to $0$ modulo $3$ in $F$ (by definition, every type $0$ forest has such a vertex). Note that $d(x) \leq n-3$, as $d(x)=n-1$ would imply that $F$ is a star and $d(x)=n-2$ would imply that $F$ is a type $1$ tree (a star with a pendant edge).
    
    First, if $d_c = 1$, we can embed $x$ in $u$ without using edges of color $c$, which gives a zero-sum copy of $F$. On the other hand, if $d_c \geq 2$, it is always possible to embed $x$ in $u$ in a way that uses a number congruent to $0$ modulo $3$ of edges of each color, yielding a zero-sum copy of $F$ (alternatively, setting \( k = 3 \) and \( m = d(x) \), and noting that \( n - 1 \geq d(x) + 2 \), we can invoke Theorem~\ref{thm:egz} to reach the same conclusion).
    \medskip
    
    \textbf{Case 2:} $\chi$ has a monochromatic $n+t-2$ clique $C$, $V(K_{n+t})-V(C)=\{u,v\}$.

Firstly, we analyze the subpatterns \( 2(a) \) and \( 2(b) \). If the forest $F$ has a vertex $x$ of degree congruent to $2$ modulo $3$, let $f$ be a leaf of $F$ not adjacent to $x$. If we embed $x$ and $f$ in $u$ and $v$, we get a zero-sum copy of $F$ in both subpatterns $2(a)$ and $2(b)$.

    On the other hand, if the forest does not have a vertex of degree two modulo $3$, there are two cases: if $\chi$ follows the pattern $2(a)$, let $x$ be a vertex of degree $0$ modulo $3$ which is adjacent to a vertex $y$ of degree $1$ modulo $3$. We embed $x$ and $y$ in $u$ and $v$ to get a zero-sum copy of $F$. If $\chi$ follows the pattern $2(b)$, let $x$ be a vertex of degree $0$ modulo $3$ of $F$. If it is the only such vertex, $F$ is of type $1$, and it is enough to embed $x$ to $u$ and leave $v$ without embedding. If there are two nonadjacent vertices of degree $0$ modulo $3$, say $x$ and $y$, it is enough to embed them in $u$ and $v$. Otherwise, there are exactly two vertices of degree $0$ modulo $3$ and they are adjacent. This implies that there exist two adjacent vertices $x$ and $y$, both with degree $1$ modulo $3$. We are done again if we embed $x$ and $y$ in $u$ and $v$.   
    
    \smallskip

    Finally, if the pattern is $2(c)$, we simply embed two nonadjacent leaves of $F$ in $u$ and $v$ to get a zero-sum copy of $F$.

    \medskip

    \textbf{Case 3:} $\chi$ is monochromatic apart from a triangle. 
    
    It is possible to embed $F$ in $K_{n+t}$ avoiding the edges of triangle, since $\alpha(F) \geq n/2 > 3$, so it suffices to embed an independent set on the vertices of the triangle.
    
\end{proof}

Corollary \ref{cor:no-alt-upper} and Proposition \ref{prop:near-rigid} allow us to assume from this point on that the coloring of the complete graph $K_m$ has an alternating four-cycle $C$ such that $K_m-C$ is not monochromatic.

\subsection{Remaining colorings with $\alpha_{C_4}(\chi)=1$}

In this subsection, we deal with the remaining colorings, i.e., colorings with an alternating four-cycle $C$ such that $K_m-C$ is not monochromatic. The cases will be divided depending on whether the forest has one or more ordinary switching structures, and whether the coloring of the complete graph has one or more vertex-disjoint alternating four-cycles. The following families of trees will be relevant in the case analysis:

\begin{definition}[the residual families]\label{def:bct}
Let $d_1,d_2,d_3$ be positive integers and let $r,r_1,r_2,r_3$ be non-negative integers.  The tree $B(d_1,r,d_2)$ consists of two parent vertices $p_1,p_2$, joined by a path with $r$ internal vertices, with $d_i$ leaves adjacent to $p_i$.  The tree $C(d_1,r_1,d_2,r_2,d_3)$ consists of three parent vertices $p_1,p_2,p_3$ on a path, with $r_1$ internal vertices between $p_1,p_2$, $r_2$ internal vertices between $p_2,p_3$, and $d_i$ leaves adjacent to $p_i$.  The tree $T(d_1,r_1,d_2,r_2,d_3,r_3)$ has a central vertex of degree three, joined to parent $p_i$ by a path with $r_i$ internal vertices, and $d_i$ leaves adjacent to $p_i$.
\end{definition}

\begin{figure}[H]
    \centering
    \includegraphics[width=.82\linewidth]{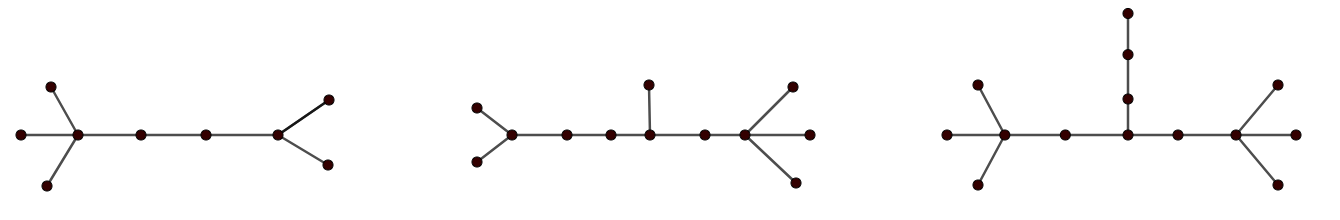}
    \caption{Examples of the $B$, $C$, and $T$ families.}
    \label{fig:bct-revised}
\end{figure}

\begin{lemma}[two independent switches]\label{lem:two-switches}
Let $\varphi$ be a copy of a forest $F$ in a colored complete graph.  Suppose there are two vertex-disjoint local operations $\sigma$ and $\tau$, each preserving the isomorphism type of $F$, such that applying $\sigma$ changes the color-sum by a nonzero residue $x \in\Z_3$, and applying $\tau$ changes the color-sum by a nonzero residue $y \in\Z_3$.  Then one of the four copies obtained by applying neither, one, or both of the operations is zero-sum.
\end{lemma}

\begin{proof}
The sum of the four copies of $F$ attainable by applying or not $\sigma$ and $\tau$ to the copy $\varphi$ are $s,s+x,s+y,s+x+y$, for a fixed $s\in\Z_3$.  Since $x\ne 0$ and $y\ne 0$, the set $\{0,x,y,x+y\}$ is all of $\Z_3$: otherwise $x+y=0$ and $x=y$, which gives $x=y=0$.  Thus the four displayed sums contain $0$.
\end{proof}

\begin{lemma}[path-sum rigidity]\label{lem:path-sum-rigid}
Let $m\ge5$.  Suppose a coloring of $K_m$ has no four-cycle $uvwx$ with
\[
        \chi(uv)+\chi(vw)\ne \chi(wx)+\chi(xu).
\]
Then the coloring is monochromatic.
\end{lemma}

\begin{proof}
The hypothesis applied to the two cyclic orders of a fixed $4$-set implies that opposite edges in every $4$-cycle have the same color.  If two incident edges $x_1x_2$ and $x_1x_3$ had different colors, then for two further vertices $x_4,x_5$ the cycles $x_1x_2x_4x_5$ and $x_1x_3x_4x_5$ would give
$\chi(x_1x_2)=\chi(x_4x_5)=\chi(x_1x_3)$, a contradiction.
\end{proof}

\begin{lemma}[an active alternating cycle]\label{lem:active-cycle}
Let $m\ge7$, and suppose a coloring of $K_m$ contains an alternating $4$-cycle $C$ such that $K_m-V(C)$ is not monochromatic.  Then there is an alternating $4$-cycle $C'$ with a vertex $x\in V(C')$ and two distinct vertices $y,z\notin V(C')$ such that $\chi(xy)\ne\chi(xz)$.
\end{lemma}

\begin{proof}
Let $C=uvwx$ be alternating.  If the conclusion failed, then every vertex of every alternating $4$-cycle would send a constant color to all vertices outside that cycle.  Since $K_m-V(C)$ is not monochromatic, choose outside vertices $p,q,r$ with $\chi(pq)\ne\chi(qr)$.  At least one of $upqw$ and $uqrw$ is alternating.  Applying the assumed failure to this alternating cycle gives $\chi(uv)=\chi(ux)$ and $\chi(wv)=\chi(wx)$, contradicting that $uvwx$ is alternating.
\end{proof}

\begin{lemma}\label{lem:thirdvertex}
    If a tree $T$ with $3 \mid e(T)$ contains a vertex of degree $0$ modulo $3$ and another vertex not $1$ modulo $3$, then it has a third vertex not $1$ modulo $3$.
\end{lemma}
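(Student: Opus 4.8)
The plan is to run a purely arithmetic counting argument on the degree sequence modulo $3$, keeping track of how many vertices fall into each residue class. First I would record the two global constraints forced on a tree. Since $T$ is a tree on, say, $n$ vertices we have $e(T)=n-1$, so the hypothesis $3\mid e(T)$ gives $n\equiv 1\pmod 3$. Moreover the handshake identity gives $\sum_{v}d(v)=2e(T)\equiv 0\pmod 3$. Writing $n_0,n_1,n_2$ for the number of vertices whose degree is congruent to $0,1,2\pmod 3$ respectively, these two facts translate into
\[
n_0+n_1+n_2=n\equiv 1\pmod 3
\qquad\text{and}\qquad
n_1+2n_2\equiv 0\pmod 3 .
\]
The second congruence, using $2\equiv -1$, is exactly $n_1\equiv n_2\pmod 3$.

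Next I would reformulate the conclusion: the vertices ``not $1\pmod 3$'' are precisely those counted by $n_0+n_2$, so I need to prove $n_0+n_2\ge 3$. The hypotheses already give something: the vertex of degree $0\pmod 3$ forces $n_0\ge 1$, and the ``another vertex not $1\pmod 3$'' forces $n_0+n_2\ge 2$ (it either pushes $n_0\ge 2$ or gives $n_2\ge 1$). So the whole content is to rule out the borderline case $n_0+n_2=2$, and then note that since there are at least three such vertices while $a$ and $b$ are only two of them, a genuinely distinct third vertex exists.

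I would finish by contradiction: assume $n_0+n_2=2$. Then $n_1=n-2\equiv 1-2\equiv 2\pmod 3$, and combining with $n_1\equiv n_2\pmod 3$ yields $n_2\equiv 2\pmod 3$. But $0\le n_2\le n_0+n_2=2$, so the only possibility is $n_2=2$, forcing $n_0=0$ and contradicting $n_0\ge 1$. Hence $n_0+n_2\ge 3$, which is the claim. I do not expect a genuine obstacle here — the argument is just bookkeeping with the two modular identities — and the only point requiring a sentence of care is the very last step, confirming that ``at least three vertices not $1\pmod 3$'' does produce a third one distinct from the two supplied by the hypothesis (immediate, since any three vertices contain one outside a given pair).
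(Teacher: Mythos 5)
Your proof is correct and follows essentially the same route as the paper's: both use the handshake identity $\sum_v d(v) = 2e(T) \equiv 0 \pmod 3$ together with the fact that $n \equiv 1 \pmod 3$, and rule out the possibility that exactly two vertices fail to have degree $1 \pmod 3$. The paper phrases this as excluding the degree sequences $1,\dots,1,0,0$ and $1,\dots,1,0,2$ directly, while you organize the same mod-$3$ bookkeeping through the residue-class counts $n_0, n_1, n_2$; the content is identical.
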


\begin{proof}
    This follows from $\sum_{v}d(v) = 2e(T) \equiv 0 \pmod 3$, and the fact that this sum has a number of terms which is congruent to $1$ modulo $3$, whence the degrees of $T$ cannot be $1, 1, 1, \dots, 1, 0, 0$ or $1,1,1,\dots,1,0,2$.
\end{proof}

\begin{lemma}[one ordinary switch and a degree contrast]\label{lem:degree-contrast}\label{lem:6vertices}
Let $F$ be a type $0$ forest without isolated vertices which is not a union of two stars, a $B(d_1,0,d_2)$, a $B(d_1,1,d_2)$, or a $C(d_1,0,1,0,d_3)$, or a forest $F \cup K_1$, where $F$ is a type $1$ forest without isolated vertices and without sibling leaves. Then, there is a $6$-tuple  $(p_1, l_1, l_2, p_2, u, v)$ of distinct vertices of $F$ such that the following hold:

\begin{enumerate}[label=\roman*)]
    \item $(p_1,l_1,l_2,p_2)$ is a switching structure;

    \item For every vertex $x \in \{p_1, p_2, l_1, l_2\}$, either both or none of $u$ and $v$ are adjacent to $x$;

    \item $d(u) \not\equiv d(v) \pmod 3$.
    
\end{enumerate}
\end{lemma}

\begin{proof}
    We start with the case of type $0$ forests. We split the proof according to the number of components of $F$:

    \textbf{Case 1: $F$ has at least $3$ connected components.} Let $x$ be a vertex with $d(x)$ not $1$ modulo $3$ and $l$ be a leaf in the same component of $x$, $l_1$ be a leaf in another component with parent $p_1$, and $l_2$ be a leaf in a third component with parent $p_2$. Then $(p_1,l_1,l_2,p_2,x,l)$ is a valid $6$-tuple.

    \textbf{Case 2: $F$ has exactly two connected components.} Let $C$ be a component with a vertex $x$ with $d(x) \not\equiv 1 \pmod 3$ which is not a star, and $l$ and $p$ be, respectively, a leaf and its parent in the other component. If there is a leaf $l'$ in $C$ with distance at least three to $x$, then let $p'$ be its parent. Finally, let $l''$ be a leaf that is not adjacent to $p'$ in $C$ (which exists since $d(x) \geq 2$). Then, the $6$-tuple $(p,l,l',p',x,l'')$ is valid. Otherwise, all the parents of leaves in $C$ are connected to $x$. Let $p_1, \dots, p_r$ be those vertices (note that we may assume $r \geq 1$ since $F$ is not a union of two stars) and $u_1, \dots u_s$ be the leaves of $C$ adjacent to $x$. If there is a $p_i$, say, $p_1$, with degree not $1$ modulo $3$, there are two cases: if $r \geq 2$, then let $l_i$ be a leaf adjacent to $p_i$ ($i=1,2$). The $6$-tuple $(p_2,l_2,l,p,p_1,l_1)$ is valid. If $r=1$, on the other hand, and $s \geq 2$, the tuple $(x,u_1,l,p,u_2,p_1)$ is valid. If $r=s=1$, then, if $d(p_1)=2$, let $l_1$ be leaf adjacent to $p_1$ and $y$ a vertex of the other component with $d(y) \equiv 0 \pmod 3$ (such a vertex exists because the component of $x$ has no vertex $\equiv 0 \pmod 3$). Then, the tuple $(l_1,p_1,x,u_1,y,l)$ is valid. If $d(p_1) \geq 3$, on the other hand, let $l'$, $l''$ be two leaves connected to $p_1$. Then the tuple $(p_1,l',l,p,x,l'')$ is valid. Finally, if all the $d(p_i)$ is $1$ modulo $3$ for every $i$, we have that $d(p_1) \geq 4$. Again, if $l'$, $l''$ are two leaves connected to $p_1$, the tuple $(p_1,l',l,p,x,l'')$ is valid.

    \textbf{Case 3: $F$ is a tree.} Let $x$ be a vertex of degree zero modulo $3$ in $F$ (in particular, $d(x) \geq 3$), and, for each neighbor $v_1,\dots,v_d$ of $x$, consider the tree $T_1,\dots,T_d$ in $F-\{x\}$ rooted at it. For a rooted tree, we call its \emph{height} the largest distance between the root and a leaf in it.

    If at least two of the $T_i$, say, $T_1$ and $T_2$ have height at least $2$, let $l_1$ and $l_2$ be leafs in each of those trees that maximize the distance to the root, and let $p_1$ and $p_2$ be its parents. Finally, let $l_3$ be a leaf in $T_3$. Then, $(p_1,l_1,l_2,p_2,x,l_3)$ satisfy the three properties of the statement.

    On the other extreme, if there are no $T_i$ with height at least two, let $p_1,\dots,p_r$ be the neighbors of $x$ that are not leaves, and $u_1,\dots,u_s$ be the neighbors of $x$ that are leaves. Note that the fact that $F$ is not a star or a $B(d_1,0,d_2)$ implies that $r \geq 2$. Also, from the hypothesis that $F$ is type $0$, one of the $p_i$, say, $p_1$, has degree not $1$ modulo $3$. If $r \geq 3$, let $l_i$ be a leaf of $T_i$ for $i \in \{1,2,3\}$. Then, the tuple $(p_3,l_3,l_2,p_2,p_1,l_1)$ satisfies the three properties of the statement. If $r=2$ and $d(x) \geq 4$, let $p_1$ be a vertex of degree not $1$ modulo $3$, and $l_2$ be a leaf connected to $p_2$. Then, the tuple $(x,u_2,l_2,p_2, u_1, p_1)$ works. If $d(x)=3$, the graph belongs to the exceptional family $C(d_1,0,1,0,d_3)$.

    Finally, if there is exactly one $T_i$, say, $T_1$, with height at least two, let $p_1,\dots,p_r$ be the neighbors of $x$ that are roots of trees of height $1$, and $u_1,\dots,u_s$ be the neighbors of $x$ that are leaves. Note that $r + s \geq 2$.

    Let us first assume that $r \geq 2$. If there is a $p_i$, say $p_1$, with degree not $1$ modulo $3$, let $l_i$ ($i=1,2$) be a leaf connected to $p_i$ and $l$, $p$ a leaf and its parent in $T_1$. Then $(p,l,l_2,p_2,p_1,l_1)$ is a valid tuple. On the other hand, if $d(p_i) \equiv 1 \pmod 3$ for every $i \in \{1,\dots,r\}$, then let $y$ be a vertex of degree not $1$ modulo $3$ in $T_1$, and $l$ be a leaf in $T_1$. Then $(p_1,l_1,l_2,p_2,y,l)$ is a valid tuple.

    Now, assume that $r=1$. By Lemma \ref{lem:thirdvertex}, there is a vertex $y$ in $T_1$ with $d(y)$ not $1$ modulo $3$. If there is such $y$ which is not adjacent to $x$, the $6$-tuple $(x,u_1,l_1,p_1,y,l)$, where $l$ is a leaf of $T_1$ not adjacent to $x$ and $l_1$ is a leaf connected to $p_1$, works. Otherwise, the neighbor of $x$ in $T_1$, call it $y$, is the only vertex in $T_1$ with degree not $1$ modulo $3$. Let $l$ be a leaf at maximum distance from $x$ in $T_1$ and $p$ be its parent. As $d(p) \equiv 1 \pmod 3$, $p$ has another leaf $l'$ connected to it. If $y$ and $p$ are not adjacent, the $6$-tuple $(p_1,l_1,l,p,y,u_1)$ is valid. If they are, then the $6$-tuple $(p_1,l_1,l,p,y,l')$ is.

    Finally, suppose that $r=0$. Let $y$ be the neighbor of $x$ in $T_1$. Notice that, if $T_1$ has two leaves $l_1$ and $l_2$ with distinct parents $p_1$ and $p_2$ such that $y \neq p_1$ and $y \neq p_2$, then $(p_1,l_1,l_2,p_2,x,u_1)$ is a valid $6$-tuple. Suppose now that this is not the case, that is, we may assume that all neighbors of $y$ but one in $T_1$ are leaves, and $y$ is the start of a path in which every (possible) inner vertex has degree two that ends in a vertex $p$ which is the parent of all leaves of $T_1$ which are not adjacent to $y$. If $d(y) \equiv 1 \pmod 3$, then let $l$ be a leaf of $T_1$ and $p$ be its parent distinct from $y$ (these vertices exist since $T$ is not a $B(d_1,0,d_2)$), and let $l_1$ and $l_2$ be leaves adjacent to $y$ (note that $d(y) \geq 4$). Then $(y,l_1,l,p, x,l_2)$ is a valid $6$-tuple. If $d(y)$ is not $1$ modulo $3$, let $l$ be a leaf adjacent to $p$. Then, if $y$ is not adjacent to $p$, then $(x,u_1,l,p,y,u_2)$ is a valid $6$-tuple. If $y$ is adjacent to $p$ and $d(y) \geq 4$, let $l_1$ and $l_2$ be two leaves adjacent to $y$. The tuple $(y,l_1,l,p,x,l_2)$ is valid. If $d(y) \in \{2,3\}$, we get the two exceptional families $B(d_1,1,d_2)$ and $C(d_1,0,1,0,d_3)$.

    \medskip

    Now we turn to the case of type $1$ forests. We will always take $u$ to be the isolated vertex of the forest. The other vertices will be chosen as follows:

    \textbf{Case 1: $\Delta(F) \geq 3$}: In this case, as $F$ does not contain sibling leaves, there are three leaves $l_i$ with distinct parents $p_i$ ($i=1,2,3$). Then, $(p_1,l_1,l_2,p_2,u,l_3)$ is the desired tuple.

    \textbf{Case 2: $\Delta(F) \leq 2$}: In this case, $F$ is a union of paths. If there are at least three paths, we let $l_i$ and $p_i$ be three leaves and their parents in distinct paths. Then, $(p_1,l_1,l_2,p_2,u,l_3)$ is the desired tuple. It there are exactly two paths in $F$, let $l_1$, $p_1$, $l_2$, $p_2$ be the leaves and the parents in the longest path, which has length at least three, and let $l_3$ be a leaf of the second path. Again, $(p_1,l_1,l_2,p_2,u,l_3)$ is the desired tuple. Finally, if $F$ is a path, let $l_1$, $p_1$, $l_2$, $p_2$ be the leaves and their parents. As $F$ at least $6$ edges, there is a vertex $v \in V(F)$ with degree two which is not adjacent to any of the $l_i$ and $p_i$. Then, the tuple $(p_1,l_1,l_2,p_2,u,v)$ works.
    
\end{proof}

\begin{proposition}[active one-cycle colorings]\label{prop:active-one-cycle}
Let $F$ be a type $0$ forest on $n$ vertices, which is not a star, not a union of two stars, and not one of the exceptional families in Lemma~\ref{lem:degree-contrast}, or a type $1$ forest without sibling leaves.  Let $\chi$ color $K_{n+t}$ with $\ac(\chi)=1$, and suppose some alternating $4$-cycle $C$ has non-monochromatic complement.  Then $\chi\to F$.
\end{proposition}

\begin{proof}
Since $\ac(\chi)=1$, the complement of $C$ has no alternating $4$-cycle.  By Lemma~\ref{lem:no-alt-cc}, it is a non-monochromatic CC coloring, so it has two nonempty classes.  Add $t$ isolated vertices to $F$ and apply Lemma~\ref{lem:degree-contrast} to the resulting type $0$ forest.  Embed the ordinary switching structure on $C$, and embed the vertices $u,v$ of unequal degree residues in two distinct CC classes outside $C$.  The adjacency condition in Lemma~\ref{lem:degree-contrast} ensures that swapping $u$ and $v$ preserves the already embedded ordinary switcher and changes the sum by $d(v)-d(u)\ne0$.  Together with the switch on $C$, Lemma~\ref{lem:two-switches} gives a zero-sum copy.
\end{proof}

\subsection{Colorings with $\alpha_{C_4}(\chi)=2$ and exceptional families}

In this last part of the upper bounds, colorings with at least two alternating four-cycles are considered, along with the exceptional families of the previous subsections.

\begin{proposition}[flexible switching]\label{prop:flexible}\label{prop:generalizedswitchings}
Let $F$ be a forest with $3\mid e(F)$.
\begin{enumerate}[label=(\alph*),leftmargin=2.2em]
\item If $\as(F)\ge2$ and $\ac(\chi)\ge2$ in a coloring of $K_{|V(F)|}$, then $\chi\to F$.
\item Suppose $F$ contains an ordinary switching structure or a degree-$2$ generalized switching structure, and vertex-disjoint from it, either a degree-$2$ generalized switching structure, or a degree-$3$ generalized switching structure with $|V(F)|\ge9$.  If the host coloring has an alternating $4$-cycle $C$ such that the graph induced by the remaining vertices is not monochromatic, then $\chi\to F$.

\end{enumerate}
\end{proposition}

\begin{proof}
For (a), embed two disjoint ordinary switching structures on two disjoint alternating $4$-cycles.  Each switch gives two copies whose sums differ by a nonzero element of $\Z_3$.  Lemma~\ref{lem:two-switches} applies.

For (b), embed the ordinary switcher or the degree-$2$ structure on the alternating cycle $C$.  If the second structure has degree $2$ and the auxiliary neighbour $v$ is outside the first structure, choose three vertices $x,y,z$ outside $C$ with $\chi(xy)\ne\chi(xz)$ and embed $v,u,\ell$ as $x,y,z$.  If $v$ lies in the ordinary switcher, by Lemma \ref{lem:active-cycle}, there is some alternating $4$-cycle $C'$ that has a vertex $x$ and two outside vertices $y,z$ with $\chi(xy)\ne\chi(xz)$.  Embed the ordinary switcher on $C'$ with $v$ at $x$, and embed $u,\ell$ at $y,z$.

For a degree-$3$ generalized switcher, it is enough to find a $4$-cycle $xyzt$ with
\[
        \chi(xy)+\chi(yz)\ne \chi(zt)+\chi(tx),
\]
which exists by Lemma \ref{lem:path-sum-rigid}.  Thus the generalized switcher supplies the second nonconstant choice.  Lemma~\ref{lem:two-switches} finishes the proof.
\end{proof}

\subsubsection{Forests with one ordinary switching structure}\label{subsubsec:oneswitch}

Now we do the residual verification for forests with exactly one ordinary switching structure.  The point is not to introduce new ideas, but to make the finite case analysis short and auditable.  The proof repeatedly uses the following principle:

\begin{quote}
If, after moving groups of three leaves by Lemma~\ref{lem:transfer}, the forest contains an ordinary switcher disjoint from a degree-$2$ or degree-$3$ generalized switcher, then Proposition~\ref{prop:flexible}(b) applies.
\end{quote}

We use the notation for the families $B$, $C$, and $T$ from Definition~\ref{def:bct}.

\begin{lemma}[transfer lemma]\label{lem:transfer}
Let $F$ be a forest in which a vertex $u$ is the parent of at least five leaves $\ell_1,\ldots,\ell_5$.  Let $F'$ be obtained from $F$ by moving three of these leaves from $u$ to another vertex $v$ of $F$ which is distinct from the five leaves.  If a coloring contains a zero-sum copy of $F$, then it contains a zero-sum copy of $F'$.
\end{lemma}

\begin{proof}
In a zero-sum copy of $F$, consider the five residues $\chi(v\ell_i)-\chi(u\ell_i)$, $1\le i\le5$.  By the Erd\H{o}s--Ginzburg--Ziv theorem in $\Z_3$, three of them have sum zero.  Moving the corresponding three leaves changes the total color-sum by zero.
\end{proof}

\begin{lemma}[three-term cover lemma]\label{lem:cover}
Let $m\ge6$ be a multiple of $3$, and let $A=(a_1,\ldots,a_m)$ be a zero-sum sequence over $\Z_3$.  If $A$ is not constant, then every element of $\Z_3$ is a sum $a_i+a_j+a_k$ with $i,j,k$ distinct.
\end{lemma}

\begin{proof}
Let $n_r$ be the number of entries equal to $r$.  Since $m\equiv0$ and $\sum_i a_i\equiv0$, we have
\[
        n_0\equiv n_1\equiv n_2\pmod3.
\]
If exactly two residues occur, then each occurs at least three times; for two distinct residues $x,y$, the sums $xxx$, $xxy$, and $xyy$ are the three elements of $\Z_3$.

If all three residues occur and their common congruence class is $0$ or $2$, then each residue occurs at least twice, and the triples $001$, $002$, and $012$ give sums $1,2,0$.  If the common congruence class is $1$, then either each residue occurs once, which would force $m=3$, or some residue occurs at least four times.  Using two copies of that abundant residue together with each of the two other residues gives the two nonzero sums, while $012$ gives zero.
\end{proof}

In the rest of this section, $p_i$ will always denote a parent vertex in a forest, and $l_i,l'_i$ will denote children of $p_i$. We will split the analysis according to the number of components of the forest.

\subsubsection{One connected component}\label{subsubsec:1comp}

Now we deal with the case of trees $F$ with $\alpha_s(F)=1$. We will divide these trees according to their number of parent vertices.

First, if a forest $F$ has at least $4$ parents, then $\alpha_s(F) \geq 2$, since two pairs of parents with their leaves constitute two switching structures. On the other extreme, the only trees with exactly one parent are the stars, which are already covered. It remains to consider trees with two or three parents.

 If $F$ has exactly two parents, $p_1$ and $p_2$, the path joining $p_1$ and $p_2$, if not an edge, consists on vertices of degree exactly $2$, i.e., the forest is a $B(d_1,n,d_2)$. If $n \geq 4$, we can find two switching structures: an inner subpath on four vertices and $p_1$, $p_2$, and one child of each parent. Also, it is immediate to check that no graph $B(d_1,0,d_2)$ is a type $0$ graph. So, in this case, it remains to consider graphs $B(d_1,n,d_2)$ with $1 \leq n \leq 3$. This is the content of Proposition \ref{prop:bgraphs}.
 
\begin{proposition}\label{prop:bgraphs}
    Let $F$ be a $B(d_1,n,d_2)$, and $N=|V(F))|=d_1+d_2+n+2$, where $1 \leq n \leq 3$ and $3 \mid e(F)=N-1$, and suppose $\chi$ is a coloring of $K_{N}$ with $\alpha_{C_4}(\chi) \geq 1$ such that there is a cycle $C$ with the property that the graph $K_{N}-C$ is not monochromatic. Then, $\chi \rightarrow F$.
\end{proposition}

\begin{proof}
    Suppose first that $n=3$, and let $p_1wvup_2$ be the path joining the parents of $F$. Suppose first that one of the parents, say, $p_2$, has at least two children. Then, $(p_1,l_1,l_2,p_2)$ and $(l'_2,u)_2$ are disjoint switching structures, so the result follows from Proposition \ref{prop:generalizedswitchings}. On the other hand, if $F$ is a $P_7$, let $v_1, \dots, v_7$ be its vertices in order. Then, $(v_1,v_2,v_3,v_4)$ is a ordinary switching structure and $(v_7,v_5)_2$ is a a degree-2 generalized switching structure, and the result follows from Proposition \ref{prop:generalizedswitchings}
    

    \medskip
    
    Now, if $n \leq 2$, $F$ cannot be a path (otherwise $3 \nmid e(F)$). Let $p_1vup_2$ (if $n=2$) or $p_1up_2$ (if $n=1$) be the path joining the parents of $F$, and suppose that $p_2$ has at least two children. Then, $(p_1,l_1,l_2,p_2)$ and $(l'_2,u)_2$ are disjoint switching structures, so the result follows from Proposition \ref{prop:generalizedswitchings}.
    
\end{proof}

If $F$ has exactly three parents, $p_1$, $p_2$, $p_3$, there are two possibilities: either one of those parents in contained in the path that joins the other two or not. In the first case, we have the graph $C(d_1,n_1,d_2,n_2,d_3)$, in the second, the graph $T(d_1,n_1,d_2,n_2,d_3,n_3)$.

The graph $C(d_1,n_1,d_2,n_2,d_3)$ has two switching structures when $n_1 \geq 3$ or $n_2 \geq 3$. Also, the graph $T(d_1,n_1, d_2,n_2,d_3,n_3)$ has two disjoint switching structures whenever $n_i \geq 2$ for some $i$. The following propositions cover the remaining graphs in these two families.

\begin{proposition}\label{prop:cgraphs}
    Let $F$ be a $C(d_1,n_1,d_2,n_2,d_3)$, and $N=|V(F)|=d_1+d_2+d_3+n_1+n_2+3$, where $0 \leq n_i \leq 2$ and $3 \mid e(F) = N-1$, and suppose $\chi$ is a coloring of $K_{N}$ with $\alpha_{C_4}(\chi) \geq 1$ such that there is a cycle $C$ with the property that the graph $K_{N}-C$ is not monochromatic. Then, $\chi \rightarrow F$.
\end{proposition}

\begin{proof}
    Let $p_1$, $p_2$ and $p_3$ be the three parents of $F$, in a way that $p_2$ is contained in the path that joins $p_1$ and $p_3$. 

    Suppose first that there is an $i$ with $n_i = 2$, say, $n_1 =2$. Let $p_1uvp_2$ be the path joining $p_1$ and $p_2$. We have that $(p_3,l_3,l_2,p_2)$ and $(l_1,u)_2$ are disjoint switching structures, so the result follows from Proposition \ref{prop:generalizedswitchings}. The same holds if $n_1=1$, where now $p_1up_2$ is the path joining $p_1$ and $p_2$. 

    \medskip
    
    Finally, let us assume that $n_1=n_2=0$. We have $d_1+d_2+d_3 \equiv 1 \pmod 3$.

    If $d_i \equiv 0 \pmod 3$ for some $i$, we cannot have $d_j=1$ for all $j \neq i$. Then, we can apply Lemma \ref{lem:transfer} to remove all the children of $p_i$ (note that we must have $d_j \geq 2$ for some $j \neq i$ in this case). The new graph is a $B(d_1,0,d_2)$ or a $B(d_1,1,d_2)$, so the result follows from Proposition \ref{prop:bgraphs} or  or \ref{prop:doublestars}.

    Suppose now that $d_i \not\equiv 0 \pmod 3$ for every $i$. This implies that the $d_i$ are congruent to $1,1,2$ in some order. In particular, one of $d_1$, $d_3$ is congruent to $1$, say, $d_1$. As there is a $d_j$ congruent to $2 \pmod 3$, we can apply Lemma \ref{lem:transfer} to reduce to the case $d_1=1$. Then, $(p_2,l_2,l_3,p_3)$ and $(l_1,p_1)_2$ are two disjoint switching structures, so we are done by Proposition \ref{prop:generalizedswitchings}.
    
\end{proof}

\begin{proposition}\label{prop:tgraphs}
        Let $F$ be a $T(d_1,n_1,d_2,n_2,d_3,n_3)$, and $N=|V(F)|=d_1+d_2+d_3+n_1+n_2+n_3+4$, where $0 \leq n_i \leq 1$ and $3 \mid e(F) =  N-1$, and suppose $\chi$ is a coloring of $K_{N}$ with $\alpha_{C_4}(\chi) \geq 1$ such that there is a cycle $C$ with the property that the graph $K_{N}-C$ is not monochromatic. Then, $\chi \rightarrow F$.
\end{proposition}

\begin{proof}
    Suppose first that $\max(n_i)=1$, and, say, $n_1=1$. Let $u$ be the neighbor of $p_1$ which is not a leaf. Then, $(p_2,l_2,l_3,p_3)$ and $(l_1,u)_2$ are two disjoint switching structures, so the result follows from Proposition \ref{prop:generalizedswitchings}.

    If, on the other hand, $n_1=n_2=n_3=0$, let $u$ be the common neighbor of the $p_i$. We have $d_1+d_2+d_3 \equiv 0 \pmod 3$. 

    If $d_i \equiv 0 \pmod 3$ for some $i$, we cannot have $d_j=1$ for all $j \neq i$. Then, we can apply Lemma \ref{lem:transfer} to remove all the children of $p_i$. The new graph is a $C(d_1,0,d_2,0,d_3)$, so the result follows from Proposition \ref{prop:cgraphs}.

    Suppose now that $d_i \not\equiv 0 \pmod 3$ for every $i$. This implies that the $d_i$ are either all congruent to $1$ or to $2$.
    In the first case, we either have $d_i=1$ for all $i$ or $d_i \geq 4$ for some $i$. In the first case, $(p_2,l_2,l_3,p_3)$ and $(l_1,p_1)_2$ are two disjoint switching structures. In the second, we apply Lemma \ref{lem:transfer} to reduce to the case $d_1=1$ and then we have again that $(p_2,l_2,l_3,p_3)$ and $(l_1,p_1)_2$ are two disjoint switching structures. Proposition \ref{prop:generalizedswitchings} implies the result in both cases.

    In the second, i.e., if $d_i \equiv 2 \pmod 3$ for every $i$, we apply Lemma \ref{lem:transfer} to reduce to $d_1=2$. In this case, $(p_2,l_2,l_3,p_3)$ and $(l_1,p_1,l_1',u)_3$ are two disjoint switching structures, so the result follows from Proposition \ref{prop:generalizedswitchings}.

\end{proof}

\subsubsection{Two connected components}\label{subsubsec:2comp}

Now we deal with the forests $F$ with two connected components and $\alpha_s(F)=1$. First, we notice that one of the components of $F$ must be a star, since otherwise we would have two switching structures (one in each component). For the same reason, the second component may contain at most two parents, whence it is either a star or a $B(d_1,n,d_2)$. In the first case, we get a union of two stars, which is addressed in Proposition \ref{prop:doublestars}. In the second, we must have $n \leq 2$, for otherwise $F$ would again contain two vertex-disjoint switching structures. These graphs are addressed in Proposition \ref{prop:broom+star}.

\begin{proposition}\label{prop:broom+star}
    Let $F$ be a forest that consists on the disjoint union of a $B(d_1,n,d_2)$ with $0 \leq n \leq 2$ and a star on $d_3$ edges, and $N=|V(F)|=d_1+d_2+d_3+n+3$, where $3 \mid e(F)=N-2$, and suppose $\chi$ is a coloring of $K_{N}$ with $\alpha_{C_4}(\chi) \geq 1$ such that there is an alternating cycle $C$ with the property that the graph $K_{N}-C$ is not monochromatic. Then, $\chi \rightarrow F$.
\end{proposition}

\begin{proof}
    Suppose that $F$ is the union of $B(d_1,n,d_2)$ and $S_{d_3}$, where $0 \leq n \leq 2$ and $d_1$, $d_2$ and $d_3$ are positive integers. We have $3 \mid d_1+d_2+d_3+n+1$. Let us call $p_3$ the center of the star, $p_1$ the parent that has $d_1$ leaves, and $p_2$ the parent that has $d_2$ leaves.
    
    \medskip

    \textbf{Case 1:} $n=2$

    Let $p_1uvp_2$ be the path joining the two parents of this component. Then, $(p_2,l_2,l_3,p_3)$ is a switching structure and $(l_1,u)_2$ is a generalized switching structure, so $\chi \rightarrow F$ by Proposition \ref{prop:generalizedswitchings}.

    \medskip

    \textbf{Case 2:} $n=1$

    Let $p_1up_2$ be the path joining the two parents of this component. Then, $(p_2,l_2,l_3,p_3)$ is a switching structure and $(l_1,u)_2$ is a generalized switching structure, so $\chi \rightarrow F$ by Proposition \ref{prop:generalizedswitchings} again.

    \medskip

    \textbf{Case 3:} $n=0$

    We have $d_1+d_2+d_3 \equiv 2 \pmod 3$.

    First, suppose that $d_1=1$ or $d_2=1$, say, $d_2=1$. In this case, $(p_1,l_1,l_3,p_3)$ and $(l_2,p_2)_2$ are two disjoint switching structures, so we can apply Proposition \ref{prop:generalizedswitchings}.

    Now we can assume that $d_1, d_2 \geq 2$. This allows us to apply Lemma \ref{lem:transfer} to reduce the problem. In case $d_2 \equiv 1 \pmod 3$ (or $d_1 \equiv 1 \pmod 3$, symmetrically), the problem is reduced to $d_2=1$. In this case, again $(p_1,l_1,l_3,p_3)$ and $(l_2,p_2)_2$ are two disjoint switching structures. In case $d_2 \equiv 2 \pmod 3$ (or $d_1 \equiv 2 \pmod 3$, symmetrically), the problem is reduced to $d_2=2$. In this case, again $(p_1,l_1,l_3,p_3)$ and $(l_1',p_2,l_2,l'_2)_3$ are two disjoint switching structures. In both cases, we are done by Proposition \ref{prop:generalizedswitchings}, with the only exception, in the latter case, of $d_1=d_2=2$ and $d_3=1$, as the forest has only $8$ vertices in this case. We will consider this case below, after the $d_1 \equiv d_2 \equiv 0$ case.

    If $d_1 \equiv d_2 \equiv 0$, we have $d_3 \equiv 2 \pmod 3$. Now we apply Lemma \ref{lem:transfer} to reduce to $d_3=2$. Then, $(p_1,l_1,l_2,p_2)$ and $(l_3,p_3)_2$ are two disjoint switching structures, and we are done by Proposition \ref{prop:generalizedswitchings} noting that the congruence classes of $d_1$, $d_2$ and $d_3$ implies that $F$ has at least $11$ vertices.

   Finally, we prove the remaining case \( d_1 = d_2 = 2 \) and \( d_3 = 1 \). Let \( \chi \) be a coloring of \( K_8 \). By hypothesis, \( \chi \) has an alternating four-cycle \( C = x_1x_2x_3x_4 \) such that \( K_8 - C \) is not monochromatic. We claim that either (i) there exists a four-cycle \( x_5x_6x_7x_8 \) in \( K_8 \setminus C \) such that  $\chi(x_5x_6) + \chi(x_6x_7) \neq \chi(x_7x_8) + \chi(x_8x_5)$, or (ii) there exist three vertices \( x_5, x_6, x_7 \) in \( K_8 \setminus C \) such that the four-cycle \( x_4x_5x_6x_7 \) is an alternating cycle of \( \chi \). 
   
   By contradiction, suppose that (i) and (ii) are false. If (i) is false, one conclude, by considering two cherries with a common edge, that \( K_8 - C \) has the property that every pair of opposite edges receives the same color. Since \( K_8 \setminus C \) is not monochromatic, there exist two incident edges \( y_1y_2 \) and \( y_1y_3 \) such that \( \chi(y_1y_2) \neq \chi(y_1y_3) \). Let \( y_4 \) be the remaining vertex in \( K_8 \setminus C \). Now, assuming (ii) is false and considering the cycles \( x_1y_1y_2y_4 \) and \( x_1y_1y_3y_4 \), we obtain the equations  
\[
\chi(x_1y_1) + \chi(y_2y_4) = \chi(x_1y_4) + \chi(y_1y_2),
\]
\[
\chi(x_1y_1) + \chi(y_3y_4) = \chi(x_1y_4) + \chi(y_1y_3),\]
\[\chi(y_1y_2) = \chi(y_3y_4),\]
\[\chi(y_1y_3) = \chi(y_2y_4),
\]
This system implies that \( \chi(y_1y_2) = \chi(y_1y_3) \), a contradiction. 

Thus, we have two cases. Suppose that (i) holds. Since our forest \( F \) has two vertex-disjoint switching structures, where one is a non-generalized switching structure and the second is a 3-generalized switching structure, a similar argument as in Case 2 of Proposition~\ref{prop:generalizedswitchings} shows that \( \chi \rightarrow F \). Now, suppose that (ii) holds. Let \( u \) (resp. \( v \)) be the other neighbor of \( p_2 \) (resp. \( p_3 \)). We embed \( F \) in \( \chi \) so that \( p_1 \) is mapped to \( x_2 \), \( p_2 \) to \( x_4 \), \( p_3 \) to \( x_6 \), and \( v \) to the remaining vertex of \( K_8 \setminus \{x_1, x_2, \dots, x_7\} \). We then conclude the proof as in Proposition~\ref{prop:flexible}.

\end{proof}

\begin{proposition}\label{prop:doublestars}
    Let $d_1$ and $d_2$ be positive integer, and $S(d_1,d_2)$ be the disjoint union of a star on $d_1$ edges and a star on $d_2$ edges. If $3\mid d_1+d_2$, then

    \[
R(S(d_1,d_2),\mathbb{Z}_3) \leq
\begin{cases}            d_1+d_2+3, & \text{if $\{d_1,d_2\} \equiv \{1,2\} \pmod 3$;}\\
             d_1+d_2+2, & \text{if $d_1 \equiv d_2 \equiv 0 \pmod 3$.} 
		 \end{cases}
\]
\end{proposition}

\begin{proof}
    The first case follows from Proposition \ref{prop:siblings-upper}.

    \medskip
    
    For the $d_1 \equiv d_2 \equiv 0 \pmod 3$ case, we start by noting that iterated applications of Lemma \ref{lem:transfer} reduces the problem to prove that $R(S(d_1,3),\mathbb{Z}_3)\leq d_1+5$ for every $d_1 \geq 3$ with $3 \mid d_1$.

    Let $\chi$ be a coloring of a $K_{d_1+5}$.

    \textbf{Case 1:} If there is a zero-sum copy of a $S_{d_1+3}$ in $\chi$, we do the same transfer technique as above (treating the remaining vertex as the center of the second star) to get a zero-sum copy of $S(d_1,3)$.

    \textbf{Case 2:} The coloring does not contain any zero-sum copy of $S_{d_1+3}$. In this case, we claim that every vertex is bichromatic and,     furthermore, the degree of each color is congruent to $2 \pmod{3}$. 

     Indeed, let $u$ a vertex of $K_{d_1+5}$. Note that each vertex $v\neq u$ has associated the colour $\chi(uv)$. In particular, finding a zero-sum star $S_{d_1+3}$ centered in $u$ is equivalent to finding a subsequence of length $d_1+3$ with zero-sum inside the sequence $V(K_n-u)$ of length $d_1+4$. By Theorem \ref{thm:egz} on $\Z_3$ we always can find such subsequence except when the sequence can be split into two monochromatic parts with different colors such that each part has length $2 \pmod{3}$, as we claimed.

    Consider the sets $V_{ij}$ consisting on the vertices whose edges are colored with colors $i$ and $j$. The three (possibly empty) sets $V_{ij}$ partition the vertex set of the complete host graph, and furthermore, all edges in between $V_{ij}$ and $V_{ik}$ get color $i$.

    Our strategy is to find two vertices $u$ and $v$ in the same $V_{ij}$ such that the sequence $(\chi(vx)-\chi(ux))_{x \neq u, v}$ is not constant. Then, we can apply Lemma \ref{lem:cover} to turn a (non-zero-sum) copy of the star centered at $u$ with $d_1+3$ leaves (the one that includes all neighbors of $u$ except $v$) into a zero-sum copy of $S(d_1,3)$ with centers in $u$ and $v$, by deleting and adding the suitable edges.

    First, if there are at least two non-empty classes among the $V_{ij}$ and one of those classes is not monochromatic, then let $uy$ and $vy$ be a pair of incident edges of distinct colors inside one of the classes. We claim that $u$ and $v$ have the desired property. Indeed, they have the same set of colors appearing in their edges (which implies $\sum_{x \neq u, v} (\chi(vx)-\chi(ux))=0$), and furthermore, $(\chi(vx)-\chi(ux))_{x \neq u, v}$ is not constant, as $\chi(vy)-\chi(uy) \neq 0$, but $\chi(vx)-\chi(ux)=0$ for $x$ in another $V_{ij}$.

    On the other hand, if all the classes are monochromatic, we may assume without loss of generality that all the edges in $V_{01}$ get color $0$. This implies that $|V_{01}|-1+|V_{02}| \equiv 2 \pmod 3$ and $|V_{12}| \equiv 2 \pmod 3$. Then, we could have that $V_{12}$ is colored with either color $1$ or $2$. In any case, we conclude that exactly two of the classes have size $2$ modulo $3$, and the third has size $1$ modulo $3$. In fact, after permuting the colors, we may assume that $|V_{01}| \equiv 1 \pmod{3}$, $|V_{02}| \equiv |V_{12}| \equiv 2 \pmod 3$, and that the edges in $V_{01}$ and $V_{02}$ have color $0$, and the edges in $V_{12}$ have color $1$. In this case, we can always find a zero-sum copy of $S(a,3)$ according to the sizes of the $V_{ij}$: in what follows, $a_i$ will denote a vertex in $V_{01}$, $b_i$, a vertex in $V_{02}$, and $c_i$ a vertex in $V_{12}$: if $|V_{01}| \geq 2$, consider the star with center $a_1$ and leaves $a_2$, $b_1$ and $b_2$, and the star on the remaining vertices centered on $c_1$; if $|V_{02}| \geq 3$, consider the star with center $b_1$ and leaves $b_2$, $b_3$ and $c_1$, and the star on the remaining vertices centered on $a_1$; if $|V_{12}| \geq 3$, consider the star with center $c_1$ and leaves $c_2$, $c_3$ and $b_1$, and the star on the remaining vertices centered on $a_1$. In all the three cases, it is immediate to check that the copies are indeed zero-sum.

    Finally, if there is just one non-empty class, say, $V_{01}$, then either the coloring is completely monochromatic, or there is some vertices $u$, $v$ and $y$ such that $\chi(uy)=0$ and $\chi(vy)=1$. This implies that the sequence $(\chi(vx)-\chi(ux))_{x \neq u, v}$ is not constant, as $\chi(vy)-\chi(uy)=1$, but we cannot have $\chi(vx)-\chi(ux)=1$ for a vertex $x$ with $\chi(ux)=1$ (such a vertex exists since the degree of $u$ in each color is congruent to $2$ modulo $3$).

    \end{proof}

\subsubsection{Three connected components}\label{subsubsec:3comp}

The forests with $\alpha_s(F)=1$ that consist of three connected components are the disjoint union of three stars. We compute the upper bound for this Ramsey number in the following proposition.

\begin{proposition}\label{prop:triplestars}
        For $d_1$, $d_2$, $d_3$ positive integers, let $S(d_1,d_2,d_3)$ denote the vertex disjoint union of a star on $d_1$ edges, a star on $d_2$ edges, and a star on $d_2$ edges. If $3\mid d_1+d_2+d_3$, then

    \[
R(S(d_1,d_2,d_3),\mathbb{Z}_3) \leq
\begin{cases}            
            d_1+d_2+d_3+5, & \text{if $d_1 \equiv d_2 \equiv d_3 \equiv 1 \pmod 3$;}\\
            d_1+d_2+d_3+4, & \text{if $d_1 \equiv d_2 \equiv d_3 \equiv  2 \pmod 3$;}\\
            d_1+d_2+d_3+3, & \text{if $d_1 \equiv d_2 \equiv d_3 \equiv  0$} \\ & \text{or $(d_1,d_2,d_3) \equiv (0,1,2) \pmod 3$.}\\
\end{cases}
\]
\end{proposition}

\begin{proof}
    The first case follows from Proposition \ref{prop:type2-upper}, and the second follows from Proposition \ref{prop:siblings-upper}.

    If  $d_1 \equiv d_2 \equiv d_3 \equiv 0 \pmod 3$, given a coloring of a $K_{d_1+d_2+d_3+3}$, we first find a zero-sum copy of $S_{d_3}$ in it, since $R(S_{d_3},\Z_3)=d_3+3 \leq d_1+d_2+d_3+3$. In the remaining $d_1+d_2+2$ vertices, we can find a zero-sum copy of $S(d_1,d_2)$, by Proposition \ref{prop:doublestars}. 

    On the other hand, if, say, $d_1 \equiv 0$, $d_2 \equiv 1$, $d_3 \equiv 2 \pmod 3$, the fact that $d_1 \geq 3$ allows us apply Lemma \ref{lem:transfer} to reduce the problem to the case $d_3=2$. In this case, let $p_1$ be the parent of degree $0 \pmod 3$, $p_2$ be the parent of degree $1$, and $p_3$ be the parent of degree $2 \pmod 3$. Then, we are done by Proposition \ref{prop:generalizedswitchings} considering the switching structures $(p_1,l_1,l_2,p_2)$ and $(l_3,p_3)_2$.
\end{proof}

\section{Open problems}

The natural next problem is to determine $R(G,\Z_3)$ for broader graph classes.  Even complete graphs are not fully settled in all residue classes; for example, $R(K_n,\Z_3)$ is known to lie in a narrow interval in the case $n\equiv7\pmod9$~\cite{caro1997binomial}.  It would also be valuable to find a sharp universal upper bound of the form $R(G,\Z_3)\le |V(G)|+c$ for all graphs with $3\mid e(G)$.

A second direction is to replace $\Z_3$ by other groups; see, for example, recent work over $\Z_2^d$~\cite{alvarado2022zero}.  For the group $\Z_p$, where $p$ is an odd prime, there are two relevant recent partial results: Shapiro \cite{shapiro2026linear} proved that $R(F,\Z_p) \leq n+6p$ for every forest on at least $8p$ edges with $p\mid e(F)$, and Caro and Mifsud \cite{caro2025zero} proved the sharper bound $R(F,\Z_p) \leq n+3p-6$ for forests that contain at least $p-1$ leaves that are pairwise a distance at least three apart. 

\section*{Acknowledgements}
J.D. Alvarado was supported partially by FAPESP (2020/10796-0), the Slovenian Research and Innovation Agency (P1-0297), and the European Union (ERC, KARST, project number 101071836).  L. Colucci was supported by FAPESP (2020/08252-2) and FAPESB (EDITAL FAPESB No. 012/2022 -- UNIVERSAL -- No. APP0044/2023).  R. Parente was supported by UNIVERSAL CNPq (406248/2021-4, 152074/2020-1) and FAPESB (EDITAL FAPESB No. 012/2022 -- UNIVERSAL -- No. APP0044/2023).  FAPESB is the Bahia Research Foundation.  Views and opinions expressed are those of the authors only and do not necessarily reflect those of the European Union or the European Research Council.  Neither the European Union nor the granting authority can be held responsible for them.

\section*{Author contributions}
All authors contributed to all aspects of the project.

\section*{Declarations}
\textbf{Conflict of interest.} The authors declare no conflict of interest.

\medskip

\noindent \textbf{Use of AI Tools.} ChatGPT was used for language editing, stylistic suggestions and draft wording for selected passages. All AI-generated text was checked, corrected where necessary, and substantially revised by the authors. The authors take full responsibility for the content of the paper.

\bibliographystyle{acm}
\bibliography{ref}

\begin{thebibliography}{10}

\bibitem{Alon1993-wh}
{\sc Alon, N., and Caro, Y.}
\newblock {On three zero-sum Ramsey-type problems}.
\newblock {\em Journal of Graph Theory 17}, 2 (1993), 177--192.

\bibitem{alvarado2022zero}
{\sc Alvarado, J.~D., Colucci, L., Parente, R., and Souza, V.}
\newblock On the zero-sum {R}amsey problem over $\mathbb{Z}_2^d$.
\newblock In {\em Latin American Symposium on Theoretical Informatics\/}
  (2022), Springer, pp.~445--459.

\bibitem{Bialostocki1990-zw}
{\sc Bialostocki, A., and Dierker, P.}
\newblock {On zero sum Ramsey numbers: small graphs}.
\newblock In {\em {Ars Combinatoria}\/} (1990), vol.~29(A), Charles Babbage
  Res. Centre, pp.~193--198.

\bibitem{Bialostocki1990-vx}
{\sc Bialostocki, A., and Dierker, P.}
\newblock {Zero sum Ramsey theorems}.
\newblock In {\em {Congressus Numerantium}\/} (1990), vol.~70, Utilitas Math.,
  pp.~119--130.

\bibitem{Bialostocki1992-sa}
{\sc Bialostocki, A., and Dierker, P.}
\newblock {On the Erdős-Ginzburg-Ziv theorem and the Ramsey numbers for stars
  and matchings}.
\newblock {\em Discrete mathematics 110}, 1-3 (1992), 1--8.

\bibitem{caro1994complete}
{\sc Caro, Y.}
\newblock A complete characterization of the zero-sum (mod 2) {R}amsey numbers.
\newblock {\em Journal of Combinatorial Theory, Series A 68}, 1 (1994),
  205--211.

\bibitem{caro1996zero}
{\sc Caro, Y.}
\newblock Zero-sum problems—a survey.
\newblock {\em Discrete Mathematics 152}, 1-3 (1996), 93--113.

\bibitem{caro1997binomial}
{\sc Caro, Y.}
\newblock Binomial coefficients and zero-sum {R}amsey numbers.
\newblock {\em journal of combinatorial theory, Series A 80}, 2 (1997),
  367--373.

\bibitem{caro2019problem}
{\sc Caro, Y.}
\newblock Problem session.
\newblock In {\em Zero-Sum Ramsey Theory: Graphs, Sequences and More\/}
  (Oaxaca, Mexico, 2019), Casa Matem{\'a}tica Oaxaca (CMO).

\bibitem{caro2025zero}
{\sc Caro, Y., and Mifsud, X.}
\newblock On zero-sum {R}amsey numbers modulo 3.
\newblock {\em arXiv preprint arXiv:2502.03864\/} (2025).

\bibitem{Erdos1961-qb}
{\sc Erdős, P., Ginzburg, A., and Ziv, A.}
\newblock {Theorem in the additive number theory}.
\newblock {\em Bull. Res. Council Israel Sect. F 10F}, 1 (1961), 41--43.

\bibitem{shapiro2026linear}
{\sc Shapiro, A.}
\newblock A linear upper bound on zero-sum {R}amsey numbers of $ d $-degenerate
  graphs in $\mathbb{Z}_p$.
\newblock {\em arXiv preprint arXiv:2604.10864\/} (2026).

\end{thebibliography}

\end{document}